\DeclareMathOperator{\C}{\mathbb{C}}
\DeclareMathOperator{\Z}{\mathbb{Z}}
\DeclareMathOperator{\N}{\mathbb{N}}
\DeclareMathOperator{\F}{\mathbb{F}}
\DeclareMathOperator{\cB}{\mathcal{B}}
\newcommand{\cK}{{\mathcal{K}}}
\DeclareMathOperator{\cS}{\mathcal{S}}
\newcommand{\supp}{\mathrm{supp}}
\newcommand{\sstab}{G_N}
\newcommand{\pstab}{\hat G_N}
\newcommand{\Grem}[1]{G(#1)}
\newcommand{\GL}{\mathrm{GL}}
\newcommand{\Vi}{\bar V}
\newcommand{\vK}{\mathbb{K}}
\newcommand{\dK}{{K}}
\newcommand{\ba}{\begin{align}}
\newcommand{\ea}{\end{align}}
\newcommand{\bea}{\begin{eqnarray}}
\newcommand{\eea}{\end{eqnarray}}
\newcommand{\be}{\begin{equation}}
\newcommand{\ee}{\end{equation}}
\newcommand{\wt}{\mathrm{wt}}
\newcommand{\ie}{{\it i.e.~}}
\newcommand{\eg}{{\it e.g.~}}
\newcommand{\diag}{{\mathrm{diag}}}
\newcommand{\vac}{|0\rangle}
\newlength{\slength}
\newcommand*{\bb}{\makebox[\slength][c]{$\bullet$}}
\newcommand*{\wb}{\makebox[\slength][c]{$\circ$}}
\title{The Large $N$ Limit of Orbifold Vertex Operator Algebras}
\author{Thomas Gem\"unden}
\address{Thomas Gem\"unden, Department of Mathematics, ETH Zurich
	CH-8092 Zurich, Switzerland}
\email{thomas.gemuenden@math.ethz.ch}
\author{ Christoph A.~Keller}
\address{Christoph A. Keller,  Department of Mathematics, University of Arizona, Tucson, AZ 85721-0089, USA}
\email{christoph.keller@math.arizona.edu}
\theoremstyle{plain}% default
\newtheorem{thm}{Theorem}[section]
\newtheorem{lem}[thm]{Lemma}
\newtheorem{prop}[thm]{Proposition}
\newtheorem{cor}[thm]{Corollary}
\theoremstyle{definition}
\newtheorem{defn}{Definition}[section]
\theoremstyle{remark}
\begin{document}

\begin{abstract}
We investigate the large $N$ limit of permutation orbifolds of vertex operator algebras. To this end, we introduce the notion of nested oligomorphic permutation orbifolds and discuss under which conditions their fixed point VOAs converge. We show that if this limit exists, then it has the structure of a vertex algebra. Finally, we give an example based on $\GL(N,q)$ for which the fixed point VOA limit is also the limit of the full permutation orbifold VOA.

\end{abstract}

\maketitle

\section{Introduction}
\subsection{Overview}
Vertex Operator Algebras (VOA) and their conformal field theories (CFT) play an important in the AdS/CFT correspondence \cite{Maldacena:1997re,Aharony:1999ti}. The correspondence conjecturally maps theories of quantum gravity to certain types of VOAs.
Such VOAs tend to have large central charge. More precisely, the correspondence maps a given theory of quantum gravity not just to a single VOA $V$, but rather to a whole family $\{V^N\}_{N\in\N}$ of VOAs, whose central charges are parametrized by $N$. For the purposes of the AdS/CFT correspondence, physicists are most interested in the  `large central charge limit', that is the limit of this family for $N\to\infty$. This limit corresponds to the classical limit of the quantum gravity theory; the parameter $1/N$ then plays a role similar to the Planck constant $\hbar$ in standard quantization problems. Such limits have not been defined mathematically, and much less investigated systematically. In this note, we build on previous work
\cite{Lunin:2000yv,Belin:2014fna,Haehl:2014yla,Belin:2015hwa} to define and investigate such limits for permutation orbifolds of vertex operator algebras.

The starting point of a permutation orbifold is the $N$-fold tensor product $V^{\otimes N}$ of some `seed VOA' $V$ of central charge $c$. The tensor product is again a VOA with central charge $cN$. 
Its automorphism group $Aut(V^{\otimes N})$ contains as a subgroup the symmetric group $S_N$, which acts by permuting the $N$ tensor factors. We can thus try to orbifold by any permutation group $G_N<S_N$ \cite{Klemm:1990df, Dijkgraaf:1996xw, Borisov:1997nc, Bantay:1997ek}. A family of permutation groups $\{G_N\}_{N\in\N}$ thus leads to a family of VOAs. In this note, we define the limit of this family, and investigate under what conditions it exists. We find that if it exists, the limit is naturally given by a vertex algebra $\Vi$.

To be precise, there are two related questions one can investigate. The first is the existence of a limit for the fixed point VOAs:
For any $N$, we can define the fixed point VOA $V^{G_N}$ of $V^{\otimes N}$. We can then ask the question:
given a suitable sequence $\{G_N\}_{N\in \N}$, can we define a limit $\lim_{N\to\infty} V^{G_N}$? We set up the formalism for this in section~\ref{s:oligo}. We define the notion of \emph{nested oligomorphic} sequences of permutation groups \cite{MR2581750,Belin:2014fna,Haehl:2014yla}, and show that if a certain family of group theoretic quantities converges, then the VOAs $V^{G_N}$ converge to a vertex algebra $\Vi$. The reason $\Vi$ is only a vertex algebra (VA) and not a vertex operator algebra is simply that its central charge diverges so that the norm of the conformal vector $\omega$ diverges. $\Vi$ probably still contains remnants of the conformal structure of a VOA, but it does not contain a copy of the Virasoro algebra.

The second and more complicated question is the limit of the orbifold VOAs $V^{orb(G_N)}$. For this let us assume that
$V$ is a holomorphic VOA. We can then try to extend $V^{G_N}$ to a holomorphic VOA $V^{orb(G_N)}$ by adjoining a suitable set of modules. For permutation orbifolds, this is always possible if $c$ is a multiple of 24 \cite{Evans:2018qgz}. The question is then: can we define a limit $\Vi^{orb} = \lim_{N\to\infty} V^{orb(G_N)}$? This is a much harder problem since now we also need to understand the modules of $V^{G_N}$ and their fusion rules and intertwining operators.
We leave a systematic discussion of this for future work. Instead, we present a family of $G_N$ that circumvents this problem. We show that for $G_N = \GL(N,q)$, the conformal weight of any twisted module diverges as $N\to\infty$. The limit VA $\Vi$ thus only contains the fixed point VA, and no twisted modules, so that \be
\Vi^{orb}=\Vi\ .
\ee
One reason why this is an interesting example is that we are interested in the growth of the number of states; that is, we are interested in the asymptotic behavior of $\dim \Vi_{(n)}$ (or $\dim \Vi_{(n)}^{orb}$) as $n\to \infty$. It is much easier to compute $\Vi_{(n)}$ than $\Vi_{(n)}^{orb}$, since for the former we do not need to include any twisted modules. For $\dim \Vi_{(n)}$, this has been done for various types of oligomorphic orbifolds in \cite{Belin:2014fna,Keller:2017rtk}. For $\Vi_{(n)}^{orb}$, this has only been done for the symmetric orbifold \cite{Keller:2011xi}, for which 
\be
\log \dim \Vi^{orb}_{(n)} \sim 2\pi n\ .
\ee
This exponential growth comes from the twisted modules. To obtain a different growth behavior, in particular, slower growth, orbifolds without twisted modules are needed; the $GL(N,q)$ example given above is of this type. However, it still grows exponentially fast: we find
\be
\log \dim \Vi^{orb}_{(n)} \sim  n^2\ .
\ee

\subsection{An Example: The Virasoro VOA}\label{ss:vir}
Before getting started, let us briefly discuss an example of a large $N$ limit of a family of VOAs. This discussion is somewhat informal, but will serve to illustrate the more rigorously results presented in this article. Consider the sequence $\{\textrm{Vir}_{cN}\}_{N\in\N}$ of Virasoro VOAs of central charge $cN$, where $c\in\C$ is the central charge of the first term of the sequence. We now want to define the notion of a $N\to \infty$ limit $\Vi$ of this sequence. As graded vector spaces, all terms $\textrm{Vir}_{cN}$ are of course isomorphic, with the $L_0$ graded character given by
\be\label{VirVS}
\prod_{n\geq 2}\frac1{1-q^n}\ .
\ee
The natural definition of $\Vi$ is as the graded vector space with dimensions (\ref{VirVS}). Next we want to identify sequences of vectors. To this end we define $\tilde\omega^N := N^{-1/2}\omega\in \textrm{Vir}_{cN}$ with $\omega$ the conformal vector. Its modes $\tilde L_n$ satisfy the commutation relations
\be\label{Virtildecomm}
[\tilde L_m^N,\tilde L_n^N] = \frac1{\sqrt{N}}(m-n)\tilde L^N_{m+n} + \frac c{12}m(m^2-1) \delta_{m,-n}\ .
\ee
We therefore define a limit state $\tilde \omega\in\Vi$ whose commutation relations is given by the limit of (\ref{Virtildecomm}), 
\be\label{freeVir}
[\tilde L_m,\tilde L_n] = cm(m^2-1) \delta_{m,-n}/12\ .
\ee
Clearly $\tilde\omega$ generates a Vertex Algebra $\Vi$. However, in the commutation relation (\ref{Virtildecomm}) the first term drops out in the limit due to our rescaling of the conformal vector; the resulting limit is thus no longer a VOA. Moreover, (\ref{freeVir}) shows that the structure of this VA $\Vi$ is very simple: in particular, the structure constant $f_{\tilde\omega\tilde\omega\tilde\omega}$ vanishes. There are of course structure constants of more complicated vectors that do not vanish, such as $f_{\omega\omega L_{-2}\omega}$, but these can always be reduced to the simpler structure constants using the Jacobi identity. Essentially, computing any correlation function is reduced to a combinatorial problem.

Physicists call a VA with commutation relations as in (\ref{freeVir}) \emph{free}; in their language this means that any correlation function can be computed using Wick contractions. This is actually a  desirable property for the AdS/CFT correspondence, as there are physical arguments that predict that the VA should become free. We will discuss below that this also happens for instance for the symmetric orbifold \cite{Lunin:2000yv,Belin:2015hwa}, which is the prime example of a VA that appears in the AdS/CFT correspondence.

\emph{Acknowledgments:}
We thank Jay Taylor for useful discussions. We thank Alexandre Belin and Alex Maloney for helpful discussions and comments on the draft. We thank the anonymous referees for helpful comments on the draft. TG thanks the Department of Mathematics at University of Arizona for hospitality.
The work of TG is supported by the Swiss National Science Foundation Project Grant 175494.

\section{Oligomorphic Families of Permutation Groups}\label{s:oligo}

\subsection{Unitary VOAs}

In this article, we will develop a notion of large $N$ limit for unitary vertex operator algebras. Note that we are assuming unitarity for convenience only, because it guides our construction in many examples such as the Virasoro VOA discussed in subsection \ref{ss:vir} and because it will be convenient to work with an inner product and orthonormal basis. A more complete theory of limits of VOAs using matrix elements instead of inner products will be presented in a follow-up project.

Let us first recall some definitions. For a comprehensive account of the theory of unitary VOAs, see \cite{MR3119224}. 
Let $V$ be a VOA. An anti-linear automorphism $\theta$ of $V$ is
an anti-linear isomorphism $\theta:V\to V$ such
that $\theta(\vac)=\vac, \theta(\omega)=\omega$ and
$\theta(u_nv)=\theta(u)_n\theta(v)$ for any $ u, v\in V$ and $n\in
\mathbb{Z}$.

Now let $V$ be a VOA with an anti-linear involution $\theta: V\to V$, i.e. an anti-linear automorphism of order $2$. Then $V$ is called unitary if there exists a positive definite Hermitian form $\langle, \rangle: V\times V\to \mathbb{C}$ such that the following invariant property holds: for any $a, u, v\in V$\\
 \begin{equation}
     \langle Y(e^{zL(1)}(-z^{-2})^{L(0)}a, z^{-1})u, v \rangle = \langle u, Y(\theta(a), z)v) \rangle\ ,
 \end{equation}
 where $L(n)$ is defined by $Y(\omega, z)=\sum_{n\in \Z}L(n)z^{-n-2}$.

For the remainder of this article, let $V$ be a unitary VOA of CFT type with inner product $\langle\cdot,\cdot\rangle$ and anti-linear involution $\theta$. In particular this means that
\be
V = \C \vac \oplus \bigoplus_{n\geq 1} V_{(n)}\ .
\ee
Let $\Phi:=\bigcup_{n\geq 0} \Phi_n$ be a homogeneous orthonormal basis of $V$, that is for $a,b\in \Phi$
\be
\langle a, b\rangle = \delta_{a,b}\ .
\ee
For simplicity let us also assume that $\Phi$ is real, that is $\theta(a)=a\ \forall a \in \Phi$.
Define the \emph{structure constants} 
\be
f_{abc} := \langle a, b_{\wt(c)+\wt(b)-\wt(a)-1} c \rangle\ .
\ee
We note that $f_{\vac\vac\vac}=1$ and that $f_{abc}=0$ if exactly two of the three vectors are equal to $\vac$; in physics language we say that `1-point functions vanish'.
Fixing all structure constants of the VOA is equivalent to fixing the state-field map by
\be
Y(b,z)c = \sum_{a\in \Phi}  z^{\wt(a)-\wt(b)-\wt(c)} f_{abc} a
\ee
Next let us rewrite Borcherds' identity in terms of structure constants:
\begin{prop}
Borcherds' identity is equivalent to the condition
\be\label{3ptJacobi}
\sum_{d\in\Phi} \binom{m}{j_1} f_{edc}f_{dab} = \sum_{d\in\Phi} (-1)^{j_2}\binom{n}{j_2} f_{ead}f_{dbc} - \sum_{d\in\Phi} (-1)^{j_3+n}\binom{n}{j_3} f_{ebd}f_{dac}\qquad \forall\ a,b,c,e\ ,
\ee
where $j_1=\wt(b)+\wt(a)-\wt(d)-n-1$, $j_2=\wt(c)+\wt(b)-\wt(d)-k-1$, $j_3=\wt(c)+\wt(a)-\wt(d)-m-1$. 
\end{prop}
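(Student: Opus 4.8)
The plan is to prove the equivalence by taking matrix elements of Borcherds' identity in the orthonormal basis $\Phi$ and inserting the resolution of the identity $\id = \sum_{d\in\Phi} d\,\langle d,\cdot\rangle$ between each pair of mode operators. Concretely, I would start from a standard form of Borcherds' identity for $a,b,c\in V$ and integers $m,n,k$,
\[
\sum_{i\geq 0}\binom{m}{i}(a_{n+i}b)_{m+k-i}c = \sum_{i\geq 0}(-1)^{i}\binom{n}{i}a_{m+n-i}(b_{k+i}c) - \sum_{i\geq 0}(-1)^{i+n}\binom{n}{i}b_{k+n-i}(a_{m+i}c)\ ,
\]
and pair both sides with an arbitrary basis vector $e\in\Phi$, so that the vector identity becomes a scalar identity for each choice of $e$.

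Next, for each term I would expand the inner mode product using the state-field map, e.g. $a_{n+i}b=\sum_{d}f_{dab}\,d$, where the definition of $f_{dab}$ forces the mode index to obey $n+i=\wt(a)+\wt(b)-\wt(d)-1$. Applying the outer mode and pairing with $e$ then gives $\langle e,(a_{n+i}b)_{m+k-i}c\rangle = \sum_d f_{edc}f_{dab}$, and similarly the two nested products on the right produce $f_{ead}f_{dbc}$ and $f_{ebd}f_{dac}$. The crucial observation is that the summation over the Borcherds index $i$ is traded for the summation over the intermediate state $d$: since $f_{dab}$ is nonzero only when $\wt(d)=\wt(a)+\wt(b)-(n+i)-1$, each value of $i$ selects a definite weight of $d$, so summing over all $d$ and solving the weight constraint for $i$ reproduces the original sum. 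This substitution turns $\binom{m}{i}$ into $\binom{m}{j_1}$ with $j_1 = \wt(b)+\wt(a)-\wt(d)-n-1$, and likewise $\binom{n}{i}\mapsto \binom{n}{j_2}$, $\binom{n}{i}\mapsto\binom{n}{j_3}$, with the signs $(-1)^i\mapsto(-1)^{j_2}$ and $(-1)^{i+n}\mapsto(-1)^{j_3+n}$, yielding exactly (\ref{3ptJacobi}).

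For the converse direction I would invoke that $\Phi$ is a complete orthonormal basis and the form is positive definite, so that two vectors coincide if and only if all of their inner products against the $e\in\Phi$ agree. Hence (\ref{3ptJacobi}) holding for all $a,b,c,e$ and all $m,n,k$ is equivalent to the displayed vector identity holding for all $m,n,k$, which is Borcherds' identity. Because $V$ is of CFT type, each graded piece is finite dimensional and for fixed weights only finitely many $d$ contribute, so all sums are finite and every step above is reversible.

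The main obstacle I anticipate is the index bookkeeping rather than any conceptual difficulty: one must check that the reindexing $i\leftrightarrow\wt(d)$ is a genuine bijection on the support of the summand, that the binomial conventions ($\binom{m}{j}=0$ for $j<0$) remain consistent on both sides after the substitution, and that the overall weight-conservation relation $\wt(e)=\wt(a)+\wt(b)+\wt(c)-m-n-k-2$ emerges \emph{uniformly} from all three terms, so that the identity is a tautology $0=0$ off the weight-conserving locus and reduces to (\ref{3ptJacobi}) precisely on it.
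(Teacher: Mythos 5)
Your proposal is correct and follows essentially the same route as the paper: write Borcherds' identity in its standard mode form, take the inner product with a basis vector $e$, insert the complete orthonormal basis $\{d\}$, and observe that the weight constraint in the definition of the structure constants makes each intermediate state $d$ pick out exactly one term of the sum over the Borcherds index, yielding the reindexed identity (\ref{3ptJacobi}). Your explicit treatment of the converse via completeness and positive-definiteness of the form is the same mechanism the paper relies on implicitly, so there is no substantive difference.
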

\begin{proof}
First note that $j_i\geq 0$, implies that the sum over $d\in \Phi$ is finite.
Next, Borcherds' identity can be written as (see \eg (4.8.3) in \cite{MR1651389})
\be
\sum_{j=0}^\infty \binom{m}{j}(a_{n+j}b)_{m+k-j}c = 
\sum_{j=0}^\infty (-1)^j\binom{n}{j}a_{m+n-j}(b_{k+j}c)
- \sum_{j=0}^\infty (-1)^{j+n}\binom{n}{j}b_{n+k-j}(a_{m+j}c)
\ee
Now simply take the inner product of this expression with the basis vector $e$, and insert a complete basis ${d}$. For a given basis vector $d$, only one term in the sum over $j$ is non-vanishing: in the last sum for instance, the term with $j=\wt(c)+\wt(a)-\wt(d)-m-1$.
\end{proof}

\subsection{Oligomorphic Orbifolds}
Let $x_N$ be a sequence of positive integers and let $X_N:=\{1,2,\ldots x_N\}$ so that $|X_N| = x_N$. Further let $G_N$ a permutation group of $X_N$ and finally let $V$ be a unitary VOA of CFT type as defined above. We define the  series $a(t)= \sum_{n\geq0} a_n t^n$ with $a_n:=|\Phi_n|$, such that $a(t)= \chi_{V}(t)t^{c/24}$ is the shifted character of $V$. Note that $\Phi_0 =\{\vac \}$ and therefore $a_0=1$.
We will denote by 
\be
V^{X_N} := V^{\otimes|X_N|}
\ee
the tensor product VOA, and by
\be
V^{G_N} :=  (V^{X_N})^{G_N}
\ee
the fixed point VOA of the tensor product VOA under $G_N$, with $G_N$ acting by permuting factors. 

To give an expression for the character of $V^{G_N}$, we will use chapter 15.3 of \cite{MR1311922} here. Let $g^N$ be a function $g^N: X_N \to \Phi$. We define the weight of $g^N$ as 
\be
|g^N|:=\sum_{x\in X_N} \wt(g^N(x))\ ,
\ee
and its  support as 
\be
\supp (g^N) := \{ x\in X_N : g^N(x)\neq \vac\}\ .
\ee
$\sigma \in G_N$ acts on $g^N$ as
\be
(\sigma g^N)(x):= g^N(x \sigma^{-1})\ .
\ee
We define $b_n(G_N)$ to be the number of orbits of $G_N$ on functions of weight $n$.
The (shifted) character of $V^{G_N}$ is then given by
\be\label{fixedpointchar}
Z_{G_N}(t) = \sum_{n\geq 0}b_n(G_N) t^n\ .
\ee
This character can also written in terms of the cycle index:
\begin{defn}
	Let $G_N$ be a permutation group on $|X_N|$ elements. For $\sigma \in G_N$, denote the number of cycles of length $k$, $1\leq k\leq |X_N|$ in the cycle decomposition of $\sigma$ by $m_k(\sigma)$. Then the cycle index of $G_N$ is the following polynomial in the variables $s_1,s_2,....,s_{|X_N|}$:
	\begin{equation}
	\chi_{G_N}(s_1,s_2,s_3,....,s_{|X_N|})= \frac{1}{|G_N|}\bigg( \sum_{\sigma\in G_N} s_1^{m_1(\sigma)}s_2^{m_2(\sigma)}s_3^{m_3(\sigma)}\cdots s_{|X_N|}^{m_N(\sigma)}\bigg)
	\end{equation} 
\end{defn}
Using Thm 15.3.2 of \cite{MR1311922} we can express the fixed point character as
\be
Z_{G_N}(t) = \chi_{G_N}(a(t),a(t^2), \ldots,a(t^N))\ .
\ee
We note that this formula can also be obtained from the expression for the character of general permutation orbifolds \cite{Bantay:1997ek}. For the moment we are only interested in the fixed point algebra, and do not include the contribution of any twisted modules.

Before defining oligomorphic permutation orbifolds, let us fix some notation first:
\begin{defn}\label{def:oligo}
Let $\cK \subset X_N$.
\begin{enumerate}
\item Denote by $\sstab^{\cK} := \{\sigma \in G_N|k \sigma \in \cK, \forall k \in \cK\}$ the setwise stabilizer of $\cK$. 
\item Denote by $\pstab^{\cK} := \{\sigma \in G_N|k \sigma = k, \forall k \in \cK\}$ the pointwise stabilizer of $\cK$. 
\item Let $\Grem{\cK}^N$ be the permutation group defined by the action of 
$\sstab^{\cK}/\pstab^{\cK}$ on $\cK$. Note that $\Grem{\cK}^N$ is the restriction of $G_N$ to $\cK$ in the natural sense. \label{Gremdef}
\end{enumerate}
\end{defn}
Note that $\pstab^{\cK}$ is a normal subgroup of $\sstab^{\cK}$, so that definition (\ref{Gremdef}) makes sense.

\begin{defn}\label{conv}
Assume $|X_N|<|X_{N+1}|$.
Let the family $\{G_N\}_{N\in\N}$ satisfy the conditions:
\begin{enumerate}
    \item The numbers $b_n(G_N)$ converge for all $n$.\label{condoligo}
    \item For every finite $\cK$, there is a group $\Grem{\cK}$ such that $\Grem{\cK}^N = \Grem{\cK}$ for $N$ large enough.
    \item $\Grem{X_{N-1}}^N< G_{N-1}$ for all $N$.\label{condnest}
\end{enumerate}
We then call $G_N$ to be \emph{nested oligomorphic}.
\end{defn}

Let us give a weaker criterion for condition (\ref{condoligo}), which also motivates the name oligomorphic. Define  $f_n(G_N)$ as the number of orbits of $n$-element subsets of $X_N$ \cite{MR2581750}. 
We then have 
\begin{prop}\label{condoligo2}
Suppose $\{G_N\}_{N\in\N}$ satisfies the nesting condition (\ref{condnest}), and $f_n(G_N)$ is bounded for each $n$ as $N\to \infty$. Then  $\{G_N\}_{N\in\N}$ satisfies (\ref{condoligo}).
\end{prop}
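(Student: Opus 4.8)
The plan is to count orbits of weight-$n$ functions by first recording the orbit of the support and then the orbit of the labeling on it. Because $V$ is of CFT type, every non-vacuum basis vector has weight $\geq 1$, so a function $g^N$ of weight $n$ satisfies $|\supp(g^N)|\leq n$. I would group the weight-$n$ functions according to $k:=|\supp(g^N)|\in\{1,\dots,n\}$ and then, for fixed $k$, according to the $G_N$-orbit of the $k$-element set $\supp(g^N)$. For a fixed support $S$ with $|S|=k$, the group acting on the possible labelings $S\to\Phi\setminus\{\vac\}$ is exactly the restriction $\Grem{S}^N$ of Definition~\ref{def:oligo}. Writing $c_n(H)$ for the number of orbits of weight-$n$ labelings of a $k$-point set under a permutation group $H$, this yields
\be
b_n(G_N) = \sum_{k=1}^{n}\ \sum_{[S]\,:\,|S|=k} c_n\big(\Grem{S}^N\big)\ ,
\ee
where the inner sum runs over the $f_k(G_N)$ orbits of $k$-element subsets.

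First I would establish boundedness. The number of labelings of a $k$-point set by non-vacuum vectors of total weight $n$ is a finite number $L_{n,k}$ depending only on $n$ and the dimensions $a_1,\dots,a_n$, not on $N$; hence $c_n(\Grem{S}^N)\leq L_{n,k}$. Since by hypothesis $f_k(G_N)$ is bounded uniformly in $N$, the displayed formula gives $b_n(G_N)\leq\sum_{k=1}^n f_k(G_N)L_{n,k}\leq C_n$ for some constant $C_n$. Next I would observe that $c_n(H)$ depends only on $H$ as a permutation group on $k$ points, up to conjugacy in the symmetric group, and that there are only finitely many such groups. Regrouping the sum by the conjugacy type of $\Grem{S}^N$, the value of $b_n(G_N)$ is determined by the finite \emph{profile} recording, for each $k\leq n$ and each permutation-group type $H$ on $k$ points, how many $k$-subset orbits have restriction group conjugate to $H$. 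Because $f_k(G_N)$ is bounded, this profile ranges over a finite set, so the integer sequence $b_n(G_N)$ takes only finitely many values.

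It remains to promote \emph{finitely many values} to \emph{eventually constant}, and this is where the nesting condition (\ref{condnest}) enters and constitutes the main obstacle. The idea is that passing from $X_{N-1}$ to $X_N$ should only refine the orbit structure on bounded-support functions, never merge orbits, so that the profile is monotone and therefore stabilizes. Concretely, I would exploit $\Grem{X_{N-1}}^N<G_{N-1}$: restricting $G_N$ to the window $X_{N-1}$ lands inside $G_{N-1}$, so the $\Grem{X_{N-1}}^N$-orbits of functions supported in $X_{N-1}$ refine the $G_{N-1}$-orbits. The delicate point -- and the crux of the whole argument -- is that the nesting relation compares $\Grem{X_{N-1}}^N$ with $G_{N-1}$ rather than $G_N$ with $G_{N-1}$ directly: an element of $G_N$ may carry a $k$-subset of $X_{N-1}$ to another $k$-subset of $X_{N-1}$ without preserving the window $X_{N-1}$, so one must rule out that such elements spuriously merge orbit types as $N$ grows. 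Since the support size is bounded by $n$ while $|X_{N-1}|\to\infty$, I expect that for $N$ large the local orbit data of a bounded-size subset become insensitive to the ambient window, so that the profile stabilizes. Making this insensitivity precise -- equivalently, organizing the $G_N$ into a coherent limiting action on $\bigcup_N X_N$ whose $k$-subset orbit counts are $\lim_N f_k(G_N)$ -- is the step I expect to require the most care.
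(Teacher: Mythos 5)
Your support-orbit decomposition of $b_n(G_N)$ and the resulting bound $b_n(G_N)\le\sum_{k=1}^n f_k(G_N)\,L_{n,k}$ are correct; this is the same estimate as the paper's (\ref{bnfn}), $b_n(G_N)\le A(n)^n f_n(G_N)$, just organized by support size. So the boundedness half of your argument matches the paper. The difficulty, however, was never boundedness.

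The genuine gap is the convergence step, and your proposal does not close it. A sequence of non-negative integers that is bounded --- or even confined to a finite set of values, which is all your profile argument can deliver --- may still oscillate forever, so ``takes finitely many values'' cannot be upgraded to ``converges'' without a further input; your final paragraph explicitly defers that input (``I expect that for $N$ large \dots the step I expect to require the most care''), which is to say the proof is missing exactly where the proposition needs it. The paper supplies this input through monotonicity, not through any stabilization of profiles: by the nesting condition (\ref{condnest}), two weight-$n$ functions on $X_{N-1}$ lying in distinct $G_{N-1}$-orbits remain in distinct $G_N$-orbits after being extended by $\vac$ to $X_N$. This injectivity of vacuum-extension on orbits gives $b_n(G_{N-1})\le b_n(G_N)$, and a non-decreasing bounded sequence of integers is eventually constant. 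Once one has this, none of your remaining machinery --- the classification of restriction groups up to conjugacy, the profile, or the ``coherent limiting action on $\bigcup_N X_N$'' whose construction you defer --- is needed.

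To your credit, the subtlety you isolate is real: an element of $G_N$ can carry a subset of $X_{N-1}$ to another subset of $X_{N-1}$ without stabilizing the window, while the nesting condition only controls the setwise stabilizer $\sstab^{X_{N-1}}$; this is precisely the content of the injectivity claim above, which the paper itself asserts in a single sentence without elaborating on that point. But locating the crux is not resolving it: your proposal neither proves that claim nor constructs the limiting action you offer as a substitute, so as written it does not prove the proposition.
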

\begin{proof}
Two different orbits of $G_{N-1}$ are in different $G_N$ orbits due to the nesting condition. It follows that
$b_n(G_N)$ grows monotonically in $N$.
To bound $b_n(G_N)$, note that for $g^N$ of fixed weight $n$, $g^N(x)=\vac$ except for at most $n$ values of $x$, and that for each of those values $g^N$ can take at most $A(n):=\sum_{j=0}^n a_j$ values. With $f_n(G_N)$ the number of orbits of $n$-element subsets of $X_N$, we have
\be\label{bnfn}
b_n(G_N) \leq A(n)^n f_n(G_N)\ .
\ee
It follows that if the $f_n(G_N)$ are bounded, then so are the $b_n(G_N)$, which means that they converge.
\end{proof}

It is, for example, easy to see that for $|X_N| = N$ the family $G_N = S_N$ is nested oligomorphic, while the family $G_N = \Z_N$ is not:
For $G_N = S_N$, we find that $f_n(S_N) = 1$ and $\Grem{\cK}^N = S_{|\cK|}$. For $G_N = \Z_N$ on the other hand, we find that $b_n(\Z_N)$ diverges for all $n > 3$. Consider for example orbits of weight $4$ states with exactly two non-vacuum factors equal to the conformal vector. We find that there are at least $N/2-1$ such orbits with representatives given by states with one conformal vector in the first factor and the other in the $i$-th factor for $i = 2, \ldots, N/2$. Other examples of oligomorphic groups include direct products and wreath products of symmetric groups \cite{Belin:2015hwa,Keller:2017rtk}. 

For oligomorphic permutation orbifolds the fixed point characters (\ref{fixedpointchar}) converge in the limit $N\to\infty$, at least as formal power series in $t$. The limit of these functions and the asymptotic growth behavior of the limit coefficients $b_n$ as $n$ goes to infinity were discussed for various oligomorphic orbifolds in \cite{Belin:2014fna,Belin:2015hwa,Keller:2017rtk}.
Part of the motivation for this work is to understand the growth of the coefficients $b_n$ better, and construct limit VAs with new growth behavior. We return to this question at the end of section~\ref{s:GLNq}.

\subsection{Space of states}
Take $n$ to be fixed, and
let $\cB_n^N$ be the set of orbits on functions on $X_N$ of weight $n$. We will now the use the functions $g^N$ introduced in section \ref{s:oligo} to define vectors in the tensor product VA. Note that $|\supp(g^N)| \leq n$. For any $g^N: X_N \to \Phi$ with weight $|g^N|=n$, define the following vector in the tensor product VOA $(V^{\otimes |X_N|})_{(n)}$,

\be
\hat \phi_{g^N} = \bigotimes_{x\in  X_N} g^N(x)\ ,
\ee
and the following vector in the fixed point VOA $(V^{G_N})_{(n)}$,
\be\label{fixedptstate}
\phi_{g^N} = A(g^N,N)^{-1/2} \sum_{\sigma\in G_N} \hat\phi_{\sigma g^N} \in (V^{G_N})_{(n)}
\ee
where $A(g^N,N)$ is a normalization factor 
that ensures that $\langle\phi_{g^N},\phi_{g^N}\rangle=1$;
we will give an explicit expression below. 
Note that two $\phi_{g^N}$ are automatically orthogonal if $g_1^N$ and $g_2^N$ are in different orbits: the inner product between $\hat\phi^N_{g_1}$ and $\hat\phi^N_{g_2}$ is only non-vanishing if $g_1=g_2$, which means that $\phi^N_{g_1}$ and $\phi^N_{g_2}$ only have non-vanishing terms if $g_1$ and $g_2$ are in the same orbit.

Now let $\{G_N\}$ be nested oligomorphic. We have:
\begin{prop}\label{grep}
For each $n$, we can find $N_n$ such that for any $N\geq N_n$, there exists a representative $g^N$ of $\cB_n^N$ such that $\supp(g^N) \subset X_{N_n}$ and $g^N|_{X_{N_n}}= g^{N_n}$.
\end{prop}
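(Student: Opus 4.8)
The plan is to convert the convergence of the orbit counts $b_n(G_N)$ into the geometric statement that, for $N$ large, every weight-$n$ orbit is represented by a function already supported on the fixed finite set $X_{N_n}$; the engine is the elementary fact that an injection between finite sets of equal cardinality is a bijection, hence surjective. First I would fix $N_n$. Each $b_n(G_N)$ is a nonnegative integer and, by condition~(\ref{condoligo}), the sequence $N\mapsto b_n(G_N)$ converges; an integer-valued convergent sequence is eventually constant, so there is an $N_n$ with $b_n(G_N)=b_n(G_{N_n})=:b_n$ for all $N\ge N_n$. (This is consistent with the monotonicity extracted from the nesting condition in the proof of Proposition~\ref{condoligo2}.) In particular each $\cB_n^N$ with $N\ge N_n$ is a finite set of the same cardinality $b_n$.

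Next I would compare the levels by extension by the vacuum. Fix once and for all representatives $g^{N_n}_1,\dots,g^{N_n}_{b_n}$ of the orbits in $\cB_n^{N_n}$, and extend each to a weight-$n$ function on $X_N$ by setting it equal to $\vac$ on $X_N\setminus X_{N_n}$; this changes neither the weight nor the support. Applying the one-step comparison of the proof of Proposition~\ref{condoligo2} along the chain $X_{N_n}\subset X_{N_n+1}\subset\cdots\subset X_N$, these $b_n$ extensions lie in $b_n$ pairwise distinct $G_N$-orbits. Since $\cB_n^N$ consists of exactly $b_n$ orbits, the extensions must represent all of them. Thus every orbit in $\cB_n^N$ contains the extension $g^N$ of some $g^{N_n}_i$, and this $g^N$ satisfies $\supp(g^N)=\supp(g^{N_n}_i)\subset X_{N_n}$ and $g^N|_{X_{N_n}}=g^{N_n}_i$, which is the assertion.

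The step carrying all the weight is the injectivity claim, that extending by the vacuum never merges two distinct orbits. The difficulty is that a permutation $\sigma\in G_N$ realizing an equivalence $\sigma g_1^N=g_2^N$ matches the two supports inside $X_{N_n}$ but need not stabilize $X_{N_n}$ setwise, so it cannot be fed directly into the nesting condition~(\ref{condnest}). I would address this by iterated restriction: restricting $G_N$ successively to $X_{N-1},\dots,X_{N_n}$ and using $\Grem{X_{M-1}}^M<G_{M-1}$ at each step yields $\Grem{X_{N_n}}^N<G_{N_n}$, after which one argues that a $\sigma$ matching supports contained in $X_{N_n}$ may be replaced by one stabilizing $X_{N_n}$, whose restriction then lies in $G_{N_n}$ and already witnesses $g^{N_n}_1\sim g^{N_n}_2$. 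It is exactly here that the full strength of Definition~\ref{conv} is needed, since the stabilization of the restriction groups $\Grem{\cK}$ controls the local behaviour on $X_{N_n}$; the non-oligomorphic family $G_N=\Z_N$ shows that the nesting condition alone does not force this injectivity.
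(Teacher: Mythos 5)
Your proof is essentially the paper's own: pick $N_n$ once $b_n(G_N)$ has stabilized (integer-valued convergence being eventual constancy), extend representatives of $\cB_n^{N_n}$ by the vacuum, observe that distinct orbits remain distinct, and conclude by the stabilized orbit count that these extensions exhaust $\cB_n^N$ — exactly the three steps in the paper's proof. The injectivity step you isolate in your final paragraph is precisely the point the paper dispatches in one clause (``these are in different orbits due to the nesting condition,'' resting on the same one-step monotonicity claim made in the proof of Proposition~\ref{condoligo2}), so your write-up is, if anything, more candid about the crux; be aware, though, that your proposed repair by iterated restriction does not compose formally — an element of $G_N$ stabilizing $X_{N_n}$ setwise need not stabilize the intermediate sets $X_M$, so $\Grem{X_{N_n}}^N < G_{N_n}$ is not an automatic consequence of condition~(\ref{condnest}) — and the replacement of a general $\sigma$ by one stabilizing $X_{N_n}$ is left unproven in your sketch, just as it is left unproven in the paper.
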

\begin{proof}
Pick $N_n$ such that $|\cB_n^{N_n}|$ has converged already. Pick representatives $g^{N_n}$ of $\cB_n^{N_n}$, and extend them to $g^N$ by $g^{N}(j)=\vac$ if $j\notin X_{N_n}$. These are in different orbits due to the nesting condition. Since $|\cB_n^{N_n}|$ has converged, we find such a representative for all orbits in $\cB_n^N$.
\end{proof}
Let us denote $\cB_n := \cB_n^{N_n}$, $X_n := X_{N_n}$ and $g:=g^{N_n}$.
For $N\geq N_n$, using  (\ref{fixedptstate}) we can then define the embedding
\be
\iota_N : \cB_n \to (V^{G_N})_{(n)}\ ,\qquad g \mapsto \phi_{g^N}\ ,
\ee
where $g^N$ is from Prop~\ref{grep}.
Now let $\cK:=\supp(g^N)$. For the normalization factor $A(g^N,N)$ we obtain 
\begin{multline}\label{normalization}
A(g^N,N) = \sum_{\tau, \sigma\in G_N} \langle \hat\phi_{\tau g^N},\hat\phi_{\sigma g^N }\rangle= |G_N| \sum_{\sigma\in G_N} \langle\hat\phi_{g^N},\hat\phi_{\sigma g^N }\rangle = |G_N| \sum_{\sigma\in \sstab^{\cK}} \langle\hat\phi_{g^N},\hat\phi_{\sigma g^N }\rangle\\
= |G_N||\pstab^{\cK}| \sum_{\sigma\in \Grem{\cK}^N}
\langle\hat\phi_{g^N},\hat\phi_{\sigma g^N}\rangle
=: A(g)|G_N||\pstab^{\cK}|,
\end{multline}
where in the first step we use that $\langle\hat\phi_{\tau g^N},\hat\phi_{\tau g^N }\rangle = \langle\hat\phi_{g^N},\hat\phi_{g^N }\rangle$, in the second step that $\langle\hat\phi_{g^N},\hat\phi_{\sigma g^N }\rangle$ vanishes unless $\supp(g^N) = \supp(\sigma g^N)$ and in the last step that $\langle\hat\phi_{g^N},\hat\phi_{\sigma \tau g^N }\rangle = \langle\hat\phi_{g^N},\hat\phi_{\sigma g^N }\rangle$ if $\tau g = g$. Note that for $N$ large enough $A(g)$ is indeed independent of $N$: this follows from the fact that $\Grem{\cK}^N$ converges to $\Grem{\cK}$, and all factors in $\cK^c$, the complement of $\cK$ in $X_N$, 
in the inner product give contribution 1.

From the above it follows:
\begin{prop}
For $N\geq N_n$, $\iota_N(\cB_n)$ forms an orthonormal basis for $(V^{G_N})_{(n)}$.
\end{prop}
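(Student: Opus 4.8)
The plan is to realize each vector $\phi_{g^N}$ as the image of a tensor-product basis vector under the $G_N$-averaging projector $P := |G_N|^{-1}\sum_{\sigma\in G_N}\sigma$, and then to establish orthonormality and spanning separately. First I would recall that, since $\Phi$ is an orthonormal basis of $V$, the vectors $\hat\phi_{h^N} = \bigotimes_{x\in X_N} h^N(x)$, with $h^N$ ranging over all functions on $X_N$ of weight $n$, form an orthonormal basis of $(V^{\otimes|X_N|})_{(n)}$. The fixed-point space $(V^{G_N})_{(n)}$ is exactly the image of $P$ acting on this graded piece, and with the convention $(\sigma h^N)(x)=h^N(x\sigma^{-1})$ one has $\sigma\hat\phi_{h^N}=\hat\phi_{\sigma h^N}$, so by (\ref{fixedptstate}) the projection satisfies $P\hat\phi_{h^N} = |G_N|^{-1}A(h^N,N)^{1/2}\,\phi_{h^N}$. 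In particular every $\phi_{h^N}$ lies in $(V^{G_N})_{(n)}$.

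Orthonormality is essentially already in place. The normalization factor $A(g^N,N)$ computed in (\ref{normalization}) is chosen precisely so that $\langle\phi_{g^N},\phi_{g^N}\rangle = 1$. For orthogonality I would use that the $\hat\phi$ form an orthonormal system, $\langle\hat\phi_{\tau g_1^N},\hat\phi_{\sigma g_2^N}\rangle = \delta_{\tau g_1^N,\sigma g_2^N}$: if $g_1^N$ and $g_2^N$ lie in different $G_N$-orbits then no pair $(\tau,\sigma)$ satisfies $\tau g_1^N = \sigma g_2^N$, so every cross term vanishes and $\langle\phi_{g_1^N},\phi_{g_2^N}\rangle = 0$. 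Conversely, if $g_2^N = \rho g_1^N$ for some $\rho\in G_N$, reindexing the symmetrization sum gives $\phi_{g_2^N} = \phi_{g_1^N}$, so $\phi_{g^N}$ depends only on the orbit of $g^N$. Hence $\iota_N$ is well defined on orbits and sends distinct orbits to an orthonormal (in particular linearly independent) family.

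For the spanning statement I would argue as follows. Since the $\hat\phi_{h^N}$ span $(V^{\otimes|X_N|})_{(n)}$ and $P$ is a projection onto $(V^{G_N})_{(n)}$, the vectors $P\hat\phi_{h^N}$---equivalently the $\phi_{h^N}$---span $(V^{G_N})_{(n)}$; letting $h^N$ run over orbit representatives of $\cB_n^N$ yields one such vector per orbit. To confirm that this orthonormal spanning set is a basis I would count dimensions: by definition $|\cB_n^N| = b_n(G_N)$, and (\ref{fixedpointchar}) identifies $b_n(G_N)$ with $\dim(V^{G_N})_{(n)}$, so the number of orthonormal vectors equals the dimension of the space they span. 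Finally, by Prop~\ref{grep} together with the convergence $|\cB_n^N| = |\cB_n|$ for $N\ge N_n$, the representatives $g^N$ used to define $\iota_N$ exhaust $\cB_n^N$ exactly once, so $\iota_N(\cB_n)$ is precisely this orthonormal basis.

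I expect the only genuinely delicate point to be this last bookkeeping step: verifying that the extended representatives of Prop~\ref{grep} are in bijection with the full set of orbits $\cB_n^N$ for $N\ge N_n$, which is exactly where the nesting and convergence hypotheses of Definition~\ref{conv} enter. The projector argument for spanning and the orthogonality computation are otherwise routine once the orthonormal tensor basis and the normalization (\ref{normalization}) are in hand.
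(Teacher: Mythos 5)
Your proof is correct and takes essentially the same route as the paper, which states the proposition without a separate proof (``from the above it follows''), relying on exactly the ingredients you assemble: orthogonality of vectors attached to distinct orbits, the normalization (\ref{normalization}), and the orbit bookkeeping of Prop.~\ref{grep}. Your averaging-projector argument and the dimension count via (\ref{fixedpointchar}) merely make explicit the spanning step that the paper leaves implicit.
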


\subsection{Structure constants}
Let us now work out explicit expressions for the structure constants of the fixed point algebra.
For $g_i \in \cup_n \cB_n$, define
\be\label{Cg1g2g3}
C(g_1,g_2,g_3):= \prod_{x=1}^{|X_N|}f_{g_1(x)g_2(x)g_3(x)}
\ee
as the structure constant of the tensor VOA. Note that since $f_{\vac\vac\vac}=1$, the choice of $N$ does not matter, as long as it is large enough. We have therefore suppressed the index $N$ in $g_i^N$.
Our goal is now to compute the structure constant of the fixed point VOA, 
\be\label{3ptsum}
f^N_{\phi^N_{g_1}\phi^N_{g_2}\phi^N_{g_3}}
=\bigg(\prod_{i=1}^3 A(g_i,N)^{-1/2}\bigg) \sum_{\sigma \in G_N^{\times 3}} C(\sigma_1 g_1, \sigma_2 g_2, \sigma_3 g_3)\ ,
\ee
where $\sigma = (\sigma_1, \sigma_2, \sigma_3)$.
To do this, we establish the following expression for the structure constants. Denote the support of $g_i$, \ie the set for which $g_i(j)\neq \vac$ by $\cK_i$.
\begin{thm}\label{thm:3pt}
Let $G_N$ be nested oligomorphic.
We then have 
\be\label{eq:oligo}
f^N_{\phi^N_{g_1}\phi^N_{g_2}\phi^N_{g_3}}=
%\sum_{\kappa\in \cS}  M(\kappa,N) \sum_{\sigma_i\in \Grem{\cK_i}} A(g_i)^{-1/2}C(\kappa_i\sigma_i g_i)\ ,
\bigg(\prod_{i=1}^3 A(g_i)^{-1/2}\bigg)\sum_{[\kappa]\in \cS}  M(\kappa,N) \sum_{[\sigma] \in \bigtimes_i \Grem{\cK_i}} C(\kappa_1\sigma_1 g_1,\kappa_2\sigma_2 g_2, \kappa_3\sigma_3 g_3)\ ,
\ee
where $\sigma = (\sigma_1, \sigma_2, \sigma_3) \in G_N^{\times 3}$, $\kappa = (\kappa_1, \kappa_2, \kappa_3) \in G_N^{\times 3}$ and
\be
\cS= G_N^{\diag} \backslash G_N\times G_N \times G_N/ \sstab^{\cK_1}\times \sstab^{\cK_2} \times \sstab^{\cK_3}
\ee
as a set, where $G_N^{\diag}$ is the diagonal subgroup of $G_N\times G_N\times G_N$, and
\be\label{MGLmq}
M(\kappa,N)  =
\left(\frac{|\pstab^{\cK_1}||\pstab^{\cK_2}||\pstab^{\cK_3}|}{|G_N||\pstab^{\kappa_1\cK_1\cup\kappa_2\cK_2\cup\kappa_3\cK_3}|^2}\right)^{1/2}
\ee
\end{thm}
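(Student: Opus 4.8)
The plan is to start from the explicit triple sum (\ref{3ptsum}) and to reorganize the sum over $G_N^{\times3}$ according to the double cosets $\cS$, exploiting \emph{two} separate instances of the diagonal invariance of the tensor structure constant $C$. First I would record the elementary but crucial symmetry: since $(\sigma g)(x)=g(x\sigma^{-1})$ and $C$ is a product over all $x\in X_N$, relabelling positions by a common $\tau\in G_N$ leaves it invariant, i.e. $C(\tau h_1,\tau h_2,\tau h_3)=C(h_1,h_2,h_3)$. I would also note that the pointwise stabilizer fixes the seed functions, $\mu g_i=g_i$ for $\mu\in\pstab^{\cK_i}$, so that the action of $\sstab^{\cK_i}$ on $g_i$ factors through $\Grem{\cK_i}=\sstab^{\cK_i}/\pstab^{\cK_i}$.

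Next I would insert the normalization (\ref{normalization}), $A(g_i,N)=A(g_i)\,|G_N|\,|\pstab^{\cK_i}|$, to pull $\prod_iA(g_i)^{-1/2}$ out front and expose the explicit powers of $|G_N|$ and $|\pstab^{\cK_i}|$. It then remains to show that $\sum_{\sigma\in G_N^{\times3}}C(\sigma_1g_1,\sigma_2g_2,\sigma_3g_3)$ reproduces the stated double-coset sum with the correct multiplicity. Writing $H=\sstab^{\cK_1}\times\sstab^{\cK_2}\times\sstab^{\cK_3}$, I would decompose $G_N^{\times3}=\bigsqcup_{[\kappa]\in\cS}G_N^{\diag}\kappa H$ and, on each double coset, parametrize $\sigma_i=\tau\kappa_i\eta_i$ with $\tau\in G_N$ and $\eta_i\in\sstab^{\cK_i}$. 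By diagonal invariance the summand equals $C(\kappa_1\eta_1g_1,\kappa_2\eta_2g_2,\kappa_3\eta_3g_3)$, independent of $\tau$, and this parametrization has constant fibre size $|G_N^{\diag}\cap\kappa H\kappa^{-1}|=|\bigcap_i\sstab^{\kappa_i\cK_i}|$ by standard double-coset counting. Collapsing the free $\tau$-sum (factor $|G_N|$) and replacing each $\eta_i$-sum over $\sstab^{\cK_i}$ by a sum over $\Grem{\cK_i}$ (factor $|\pstab^{\cK_i}|$, using that $\pstab^{\cK_i}$ fixes $g_i$), the contribution of $[\kappa]$ becomes $\tfrac{|G_N|\prod_i|\pstab^{\cK_i}|}{|\bigcap_i\sstab^{\kappa_i\cK_i}|}\sum_{\sigma\in\prod_i\Grem{\cK_i}}C(\kappa_1\sigma_1g_1,\kappa_2\sigma_2g_2,\kappa_3\sigma_3g_3)$.

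The remaining step, which I expect to be the main obstacle, is to recognize a residual symmetry of the inner sum over the full $\prod_i\Grem{\cK_i}$ and to check that reducing by it turns the prefactor into exactly $M(\kappa,N)$. The point is that $\bigcap_i\sstab^{\kappa_i\cK_i}$ stabilizes each $\kappa_i\cK_i$ setwise, so applying diagonal invariance a \emph{second} time, any $\gamma$ in this group fixes $C(\kappa_1\sigma_1g_1,\dots)$ while acting on $(\sigma_i)$ by left multiplication with $\ell(\gamma):=\big(\overline{\kappa_i^{-1}\gamma\kappa_i}\big)_i\in\prod_i\Grem{\cK_i}$. The map $\ell$ is a homomorphism with kernel $\bigcap_i\pstab^{\kappa_i\cK_i}=\pstab^{\kappa_1\cK_1\cup\kappa_2\cK_2\cup\kappa_3\cK_3}$, so its image $L$ has order $|\bigcap_i\sstab^{\kappa_i\cK_i}|/|\pstab^{\kappa_1\cK_1\cup\kappa_2\cK_2\cup\kappa_3\cK_3}|$ and acts freely by left multiplication. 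Hence the inner sum over $\prod_i\Grem{\cK_i}$ equals $|L|$ times the sum over orbits $[\sigma]\in L\backslash\prod_i\Grem{\cK_i}$, which is precisely the inner sum of the theorem. The factor $|\bigcap_i\sstab^{\kappa_i\cK_i}|$ then cancels, leaving $\tfrac{|G_N|\prod_i|\pstab^{\cK_i}|}{|\pstab^{\kappa_1\cK_1\cup\kappa_2\cK_2\cup\kappa_3\cK_3}|}$, and combining with $|G_N|^{-3/2}\prod_i|\pstab^{\cK_i}|^{-1/2}$ reproduces $M(\kappa,N)$ as stated in (\ref{MGLmq}).

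Finally I would note that the nested oligomorphic hypothesis is exactly what makes every piece $N$-stable: $\Grem{\cK_i}^N=\Grem{\cK_i}$ and $A(g_i)$ are independent of $N$ for $N$ large, so the identity holds verbatim at each sufficiently large $N$, with the only $N$-dependence residing in the index set $\cS$ and in $M(\kappa,N)$. The delicate bookkeeping is entirely in matching the two overcounting factors, namely the double-coset fibre $|\bigcap_i\sstab^{\kappa_i\cK_i}|$ against the residual left-multiplication symmetry of order $|\bigcap_i\sstab^{\kappa_i\cK_i}|/|\pstab^{\kappa_1\cK_1\cup\kappa_2\cK_2\cup\kappa_3\cK_3}|$, so that their quotient produces the single pointwise stabilizer of the union appearing in $M(\kappa,N)$.
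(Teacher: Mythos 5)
Your proof is correct, and while it shares the paper's overall strategy (reorganize the triple sum (\ref{3ptsum}) over $G_N^{\times 3}$ into double cosets, using diagonal invariance of $C$ and the fact that $\pstab^{\cK_i}$ fixes $g_i$), the bookkeeping route is genuinely different, and yours is more careful at exactly the point where the paper is terse. The paper proceeds in stages: it quotients first by the pointwise stabilizers to form $\cS_1$, then by the diagonal action to form $\cS_2$, computing orbit lengths via orbit--stabilizer with stabilizer $\pstab^{\beta_1\cK_1\cup\beta_2\cK_2\cup\beta_3\cK_3}$, and finally quotients $\cS_2$ by the right action of $\prod_i \Grem{\cK_i}$ to reach $\cS$ --- at which point it simply asserts that the sum over $\cS_2$ factors as a sum over $\cS$ times a sum over the $\Grem{\cK_i}$. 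You instead decompose $G_N^{\times 3}$ into $(G_N^{\diag},\prod_i\sstab^{\cK_i})$ double cosets in one step, with constant fibre size $|\bigcap_i\sstab^{\kappa_i\cK_i}|$, and then explicitly exhibit the residual symmetry: the homomorphism $\ell\colon\bigcap_i\sstab^{\kappa_i\cK_i}\to\prod_i\Grem{\cK_i}$ with kernel $\pstab^{\kappa_1\cK_1\cup\kappa_2\cK_2\cup\kappa_3\cK_3}$, whose image $L$ acts freely and leaves $C$ invariant. This is precisely what the paper glosses over: the fibers of $\cS_2\to\cS$ are $L$-coset spaces, not full copies of $\prod_i\Grem{\cK_i}$, so with $M(\kappa,N)$ as stated in (\ref{MGLmq}) the inner sum in (\ref{eq:oligo}) must be read as a sum over $L\backslash\prod_i\Grem{\cK_i}$ (this is the content of the bracket notation $[\sigma]$); if one summed over all of $\prod_i\Grem{\cK_i}$, the correct prefactor would instead carry $|\bigcap_i\sstab^{\kappa_i\cK_i}|$ in the denominator. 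Your cancellation $|\bigcap_i\sstab^{\kappa_i\cK_i}|/|L| = |\pstab^{\kappa_1\cK_1\cup\kappa_2\cK_2\cup\kappa_3\cK_3}|$ makes the equivalence of the two forms precise, and your final prefactor reproduces (\ref{MGLmq}) exactly as stated, with the square on $|\pstab^{\kappa_1\cK_1\cup\kappa_2\cK_2\cup\kappa_3\cK_3}|$; note that the last display in the paper's own proof drops this square, a typo which your computation corrects. What the paper's staged route buys in exchange is that the intermediate sets $\cS_1$, $\cS_2$ admit the pictorial interpretation (\ref{pictorial}) and directly furnish the $N$-independent representatives used later, a point you address only briefly at the end via nested oligomorphicity.
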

Before we prove theorem~\ref{thm:3pt}, let us make some remarks on the intuition behind it. First note that the sum over $[\sigma]$ is indeed well-defined, that is independent of the choice of the representative of $[\kappa]$: $C(\kappa_1 g_1, \kappa_2 g_2, \kappa_3 g_3)$ is invariant under the action of $G^{\diag}$ since it is simply a product of structure constants; it is invariant under $\pstab^{\cK_i}$, since this only permutes factors with vacuum vectors; and acting with $\sstab^{\cK_i}$ only permutes the elements of $\Grem{\cK_i}$, that is the terms in the sum over $[\sigma]$. Similarly $|\pstab^{\kappa_1\cK_1\cup\kappa_2\cK_2\cup\kappa_3\cK_3}|$ only depends on the conjugacy class $[\kappa]$ since it is invariant under $\sstab^{\cK_i}$, which leaves $\cK_i$ invariant, and $G_N^{\diag}$, which simply gives an overall conjugation of the stabilizer, leaving its order invariant.

Next we note that for $N$ large enough, the only component in (\ref{eq:oligo}) that depends on $N$ is $M(\kappa,N)$. The idea is that because $G_N$ is nested oligomorphic, by choosing an appropriate element of $G_N^{\diag}$, we can always find a representative such that all three $\kappa_i\sigma_ig_i$ have support in the first say $n$ factors, where $n$ is independent of $N$ for large enough $N$. In the pictorial notation of \cite{Belin:2015hwa}, such a $\kappa$ might for instance give the following:
\begin{eqnarray}\label{pictorial}
\kappa_1\sigma_1 g_1:&&\overbrace{
%\underbrace{
\bb\bb\bb\bb\bb\bb\bb\bb\bb\bb\bb\bb\bb\bb
%}_{K_1}\underbrace{
\wb\wb\wb\wb
%}_{K_2-J}
\wb\cdots\wb}^{X_N}\nonumber\\
\kappa_2\sigma_2 g_2:&&
%\underbrace{
\bb\bb\bb\bb\bb\bb\bb\bb\bb\bb
%}_J
\wb\wb\wb\wb
%\underbrace{
\bb\bb\bb\bb
%}_{K_2-J}
\wb\cdots\wb \\
\kappa_3\sigma_3 g_3:&&
%\underbrace{
\bb\bb\bb\bb\bb
%}_{n_3}
\wb\wb\wb\wb\wb
%\underbrace{
\bb\bb\bb\bb\bb\bb\bb\bb
%}_{K_1+K_2-2J}
\wb\cdots\wb\nonumber
\end{eqnarray}
Here black bullets indicate a non-vacuum vector in the corresponding factor, and white bullets vacuum vectors. $C(\kappa_1\sigma_1 g_1,\kappa_2\sigma_2 g_2, \kappa_3\sigma_3 g_3)$ is then independent of the number of trailing white bullets, since the structure constant $f_{\vac\vac\vac}$ is equal to 1. From the point of view of the pictorial representation, the sum over $[\kappa]$ parametrizes the different configurations \`a la (\ref{pictorial}); $M(\kappa,N)$ gives their multiplicity; and the sum over the $\Grem{\cK_i}$ gives the sum over the different permutations among the black bullets. 

Let us now prove theorem~\ref{thm:3pt} by making these remarks more formal.

\begin{proof}

To prove the theorem, first consider the quotient set
\be\label{cSt}
\cS_1 = G_N\times G_N \times G_N  / (\pstab^{\cK_1}\times \pstab^{\cK_2} \times \pstab^{\cK_3})\ .
\ee
Note that this is not a group since the group with which we quotient is not normal. For our purposes it is enough that the quotient exists as a set.
The diagonal subgroup $G_N^{\diag}$ acts on $\cS_1$ as
\be
\tau^{\diag} \cdot (\sigma_1\pstab^{\cK_1},\sigma_2\pstab^{\cK_2},\sigma_3\pstab^{\cK_3}) = (\tau\sigma_1\pstab^{\cK_1},\tau\sigma_2\pstab^{\cK_2},\tau\sigma_3\pstab^{\cK_3})\ .
\ee
Alternatively, since we are only interested in the quotient as a set, we can use the orbit-stabilizer theorem to describe $\cS_1$ as the set of images of $\cK_1\times\cK_2\times\cK_3$ under $G_N\times G_N \times G_N$,
\be\label{S1orbstab}
\cS_1 \cong (G_N\times G_N \times G_N)\cdot (\cK_1\times\cK_2\times\cK_3)\ .
\ee
Diagrams like (\ref{pictorial}) then depict an element $\alpha\in\cS_1$, with black bullets indicating elements of $\sigma_i\cK_i$, and white bullets indicating elements in its complement.
$G_N^{\diag}$ then acts in the obvious way on the set $\cS_1$. This allows us to define the quotient set
\be
\cS_2 =  G_N^{\diag}\backslash \cS_1 \ .
\ee
If $G_N$ is nested oligomorphic, then $\cS_2$ converges as $N\to\infty$: that is, $|\cS_2|$ converges, and for every equivalence class we can find an $N$-independent representative $\beta\in\cS_2$. To see this we use the description (\ref{S1orbstab}) of $\cS_1$. Since $G_N$ is nested oligomorphic, we can use the same argument as proposition~\ref{grep}: for large enough $N$ we find $\tau\in G_N$ such that $\tau (\sigma_1\cK_1\cup \sigma_2\cK_2\cup\sigma_3\cK_3) \subset X_{\bar N}$ where $\bar N$ is independent of $N$. This gives the $N$-independent representative $\beta$. In (\ref{pictorial}) for instance $\beta$ was chosen such that its support is in the first 18 elements of $X_N$.

To compute the length of an orbit $[\beta]\in\cS_2$, we apply the orbit-stabilizer theorem to the action of $G_N^\diag$ on $\cS_1$. For a given $[\alpha]\in\cS_1$, the stabilizer is given by
\be
\pstab^{\alpha_1\cK_1\cup \alpha_2\cK_2\cup\alpha_3\cK_3}\ ,
\ee
so that by the orbit-stabilizer theorem the orbit has length 
\be
|[\beta]| = |G_N|/|\pstab^{\alpha_1\cK_1\cup \alpha_2\cK_2\cup\alpha_3\cK_3}|
= |G_N|/|\pstab^{\beta_1\cK_1\cup \beta_2\cK_2\cup\beta_3\cK_3}|\ .
\ee
Next note that $C(g_1,g_2,g_3)$ is invariant under $\pstab^{\cK_i}$, which only permutes identical vacuum factors in the three individual states, and $G_N^{diag}$, which only permutes overall factors in (\ref{Cg1g2g3}). This means we can write (\ref{3ptsum}) as
\be\label{S2sum}
\sum_{\sigma \in G_N^{\times 3}} C(\sigma_1 g_1, \sigma_2 g_2, \sigma_3 g_3) = \sum_{[\beta]\in\cS_2}\bar M(\beta,N) C(\beta_1 g_1,\beta_2 g_2,\beta_3 g_3) 
\ee
where $\bar M(\beta;N)$ is the length of the equivalence class, given by
\be
\bar M(\beta;N)= \frac{|G_N||\pstab^{\cK_1}||\pstab^{\cK_2}||\pstab^{\cK_3}|}{|\pstab^{\beta_1\cK_1\cup \beta_2\cK_2\cup\beta_3\cK_3}|}\ .
\ee
Again by oligomorphicity we can choose a representative of $[\beta^2]$ that is $N$-independent. 
Next we consider $\Grem{\cK}^N = \sstab^\cK/\pstab^\cK$. Note that here we want to use the fact this is a group, and not just a quotient set. In fact this group has a well-defined right action on $\cS_2$, so that we can define the quotient set 
\be
\cS := \cS_2 / \Grem{\cK_1}^N\times \Grem{\cK_2}^N \times \Grem{\cK_3}^N\ .
\ee
Since as sets
\be
\big(\frac{G_N}{\hat G_N^{\cK}}\big)\big/ \Grem{\cK} \cong G_N/G_N^\cK\ ,
\ee
we can also write $\cS$ as
\be
\cS = G_N^{\diag}\backslash G_N\times G_N \times G_N/(\sstab^{\cK_1}\times \sstab^{\cK_2} \times \sstab^{\cK_3})\ .
\ee
Finally we take $N$ large enough such that by property (\ref{Gremdef}) of definition~\ref{def:oligo} $G(\cK_i)^N=G(\cK_i)$.
This allows us to write the sum over $\cS_2$ in (\ref{S2sum}) as a sum over $\cS$ times a sum over the $G(\cK_i)$.
Including the normalization (\ref{normalization}), we can thus write (\ref{3ptsum}) as
\be
f^N_{\phi^N_{g_1}\phi^N_{g_2}\phi^N_{g_3}}=
\sum_{[\kappa]\in \cS}  M(\kappa,N) \sum_{\sigma \in \bigtimes_i \Grem{\cK_i}} A(g_i)^{-1/2}C(\kappa_1\sigma_1 g_1,\kappa_2\sigma_2 g_2, \kappa_3\sigma_3 g_3)\ ,
\ee
where
\be%\label{MGLmq}
M(\kappa,N) := \prod_i (|G_N||\pstab^{\cK_i}|)^{-1/2}\bar M(\kappa,N) =
\left(\frac{|\pstab^{\cK_1}||\pstab^{\cK_2}||\pstab^{\cK_3}|}{|G_N||\pstab^{\kappa_1\cK_1\cup\kappa_2\cK_2\cup\kappa_3\cK_3}|}\right)^{1/2}
\ee
\end{proof}

Using the above results, we can now define a limit VA in the following way:

\begin{thm}\label{largeNlimit}
Assume $V$ is a unitary VOA of CFT type, $G_N$ is nested oligomorphic, and the limits
\be
 f_{g_1 g_2 g_3 }:= \lim_{N\to\infty}f^N_{g_1 g_2 g_3} 
\ee
exists for all $g_1,g_2,g_3$, where $f^N_{g_1 g_2 g_3}$ denotes the structure constant of the VOA $V^{G_N}$.
Then define
$\Vi = \bigoplus_{n=0}^\infty V_{(n)}$ with 
\be
V_{(n)} :=\bigoplus_{g\in\cB_n} \C \phi_g\ 
\ee
and 
\be
Y(\phi_{g_2},z)\phi_{g_3} = \sum_{g_1} f_{g_1 g_2 g_3} \phi_{g_1} z^{|g_1|-|g_2|-|g_3|}\ .
\ee
Then  $(V,Y)$ defines a vertex algebra.
\end{thm}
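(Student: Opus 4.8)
The plan is to verify the vertex-algebra axioms for $(\Vi,Y)$ directly from the convergence of the structure constants, using the Proposition that recasts Borcherds' identity purely in terms of the $f_{abc}$. Note first that $Y$ is a well-defined field: for fixed $\phi_{g_2},\phi_{g_3}$ and a fixed power of $z$, the coefficient is a sum over $g_1\in\cB_{|g_1|}$, which is finite because oligomorphicity (condition~\ref{condoligo}) guarantees $|\cB_n|<\infty$; hence every mode $(\phi_{g_2})_k\phi_{g_3}$ lies in $\Vi$. The vacuum is $\vac=\phi_{g_0}$ with $g_0\in\cB_0$ the all-vacuum function. The easy axioms are then immediate. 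Truncation holds by the grading, since $(\phi_{g_2})_k\phi_{g_3}$ can only involve $\phi_{g_1}$ with $|g_1|=|g_2|+|g_3|-k-1\geq0$, which is impossible for large $k$. The vacuum axiom $Y(\vac,z)=\id$ and the creation axiom $\lim_{z\to0}Y(\phi_g,z)\vac=\phi_g$ follow from the values $f_{a\vac c}=\delta_{ac}$ and $f_{g_1g\vac}=\delta_{g_1g}$ (for equal weights): these hold in every $V^{G_N}$ because it is a genuine VOA with standard vacuum, so they persist in the limit.

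The heart of the argument is Borcherds' identity, which I would establish in its equivalent form~(\ref{3ptJacobi}) for the limit structure constants. Fix $a,b,c,e\in\bigcup_n\cB_n$ and integers $m,n,k$. For every sufficiently large $N$, $V^{G_N}$ is a VOA, so by the Proposition its structure constants satisfy~(\ref{3ptJacobi}) verbatim. Two facts make the limit tractable: each sum over the intermediate vector $d$ is finite, since the constraints $j_1,j_2,j_3\geq0$ bound $\wt(d)$; and by Proposition~\ref{grep} together with nested oligomorphicity, the finite collection of relevant $d$ (of bounded weight) can be identified $N$-independently once $N$ is large. Thus, for large $N$,~(\ref{3ptJacobi}) is an equality of two fixed finite sums in the quantities $f^N_{\cdots}$. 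Letting $N\to\infty$ and using $f^N\to f$ termwise, the identity passes to the limit structure constants.

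By the converse direction of the Proposition---a purely formal manipulation of structure constants and the induced state--field map, with no analytic input---the validity of~(\ref{3ptJacobi}) for the $f_{abc}$ is equivalent to Borcherds' identity for $(\Vi,Y)$. With Borcherds' identity and the vacuum and creation axioms in hand, I would invoke the standard fact that these data suffice to define a vertex algebra: the translation operator $T\phi_g:=(\phi_g)_{-2}\vac$ is automatically well-defined, satisfies $T\vac=0$ and $Y(T\phi_g,z)=\partial_zY(\phi_g,z)$, and its defining constants $f_{g_1g\vac}$ (with $|g_1|=|g|+1$) converge by hypothesis so that $T$ maps $\Vi$ into itself. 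Crucially, no conformal vector is needed, consistent with $\Vi$ being only a vertex algebra and not a VOA. This completes the verification.

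The main obstacle is the interchange of limit and summation in the second step, specifically the bookkeeping ensuring the index set of intermediate vectors $d$ stabilizes with $N$. One must check that ``the same'' orbit of $\cB_n$ is referenced for all large $N$---precisely what nested oligomorphicity and Proposition~\ref{grep} deliver---and that no $d$ of unbounded weight enters, which the constraints $j_i\geq0$ prevent. A secondary point is to confirm that the limit structure constants inherit the symmetry and reality properties implicitly used in the equivalence Proposition (in particular the nondegeneracy of the limiting inner product making $\{\phi_g\}$ orthonormal), so that its converse applies unchanged.
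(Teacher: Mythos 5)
Your proposal is correct and follows essentially the same route as the paper: verify the structure-constant form of Borcherds' identity (\ref{3ptJacobi}) for the limit constants by exchanging limit and the finite, $N$-independent sum over intermediate vectors, check the vacuum and creation properties from their finite-$N$ counterparts, and then invoke the standard fact that these data (with $T:a\mapsto a_{-2}\vac$) yield the remaining vertex-algebra axioms. Your additional bookkeeping (finiteness of $|\cB_n|$, truncation from the grading, stabilization of the intermediate index set via Proposition~\ref{grep}) only makes explicit details the paper leaves implicit.
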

\begin{proof}
To prove that $(V,Y)$ defines a VA, we establish that the structure constants $f_{g_1g_2g_3}$ satisfy the Jacobi identity, and that $Y$ satisfies the creation property $Y(a,z)\vac = a + O(z)$ and the vacuum property $Y(\vac,z) = \mathrm{id}$. These three then imply the more commonly used axioms of locality and translation \cite{MR2023933}: the translation $T$ for instance is defined as $T: a\to a_{-2}\vac$ and automatically satisfies the translation axiom because of the Jacobi identity. To show the creation property, note that for finite $N$ we have $f^N_{\phi^N_{g_1}\phi^N_{g_2}\phi^N_{\vac}}=\delta_{\phi^N_{g_1},\phi^N_{g_2}}$ for $|g_1|=|g_2|$. This also holds for the limit, so that indeed $Y(a,z)\vac = a + O(z)$. Similarly, to show the vacuum property, note that for all finite $N$ we have $f^N_{\phi^N_{g_1}\phi^N_{\vac}\phi^N_{g_2}}=\delta_{\phi^N_{g_1},\phi^N_{g_2}}$ for $|g_1|=|g_2|$. Since the $f^N$ satisfy (\ref{3ptJacobi}) for all $N$, and the sum is over a finite and $N$-independent number of terms, we can exchange sum and limit. It then follows that the limit also satisfies (\ref{3ptJacobi}).
\end{proof}

To establish the existence of the $N\to\infty$ limit, it is thus enough to ensure that $M(\kappa,N)$ converges:
\begin{cor}
If $\{G_N\}$ is nested oligomorphic and the $ M(\kappa,N)$ converge for all $\kappa$, then the fixed point VOA $V^{G_N}$ has a large $N$ VA limit.
\end{cor}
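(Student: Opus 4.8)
The plan is to read off the $N$-dependence directly from the explicit formula \eqref{eq:oligo} of Theorem~\ref{thm:3pt}, argue that under the stated hypothesis every ingredient except $M(\kappa,N)$ stabilizes for large $N$, and conclude that convergence of the $M(\kappa,N)$ forces convergence of each structure constant; Theorem~\ref{largeNlimit} then supplies the limit vertex algebra and we are done.

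Concretely, I would fix a triple $g_1,g_2,g_3$ with finite supports $\cK_1,\cK_2,\cK_3$ and apply Theorem~\ref{thm:3pt} to write $f^N_{\phi^N_{g_1}\phi^N_{g_2}\phi^N_{g_3}}$ as $\left(\prod_{i=1}^3 A(g_i)^{-1/2}\right)$ times the double sum over $[\kappa]\in\cS$ and $[\sigma]\in\bigtimes_i\Grem{\cK_i}$ of $M(\kappa,N)\,C(\kappa_1\sigma_1 g_1,\kappa_2\sigma_2 g_2,\kappa_3\sigma_3 g_3)$. Then I would go through these ingredients one at a time and record that all of them are eventually independent of $N$: the normalizations $A(g_i)$ are $N$-independent for large $N$ by the remark following \eqref{normalization}; the index set $\cS$ is finite and stabilizes, being the quotient of the convergent set $\cS_2$ (whose convergence is established inside the proof of Theorem~\ref{thm:3pt}) by the finite, $N$-independent group $\bigtimes_i\Grem{\cK_i}$; the factor groups $\Grem{\cK_i}$ are themselves $N$-independent by condition~(2) of Definition~\ref{conv}; and each coefficient $C(\kappa_1\sigma_1 g_1,\kappa_2\sigma_2 g_2,\kappa_3\sigma_3 g_3)$ is a finite product of seed structure constants, unaffected by the trailing vacuum factors and hence independent of $N$.

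With this bookkeeping in place, the only surviving $N$-dependence sits in $M(\kappa,N)$. Since the outer sum over $[\kappa]\in\cS$ and the inner sum over $[\sigma]\in\bigtimes_i\Grem{\cK_i}$ range over a finite, $N$-independent index set, and every $M(\kappa,N)$ converges by hypothesis, I can pass the limit $N\to\infty$ through the finite sum term by term. This shows that $f_{g_1g_2g_3}:=\lim_{N\to\infty}f^N_{g_1g_2g_3}$ exists for every triple $g_1,g_2,g_3$, which is precisely the hypothesis of Theorem~\ref{largeNlimit}; invoking it yields the desired large $N$ vertex algebra limit of $V^{G_N}$.

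The argument is essentially routine once Theorem~\ref{thm:3pt} is in hand; the only point requiring genuine care is verifying that the index set $\cS$ stabilizes and stays finite, so that the elementary ``finite sum of convergent sequences converges'' applies. This is exactly the content of the convergence of $\cS_2$ proved for Theorem~\ref{thm:3pt}, so I would quote that step rather than re-derive it, and merely observe that quotienting the stabilized finite set $\cS_2$ by the fixed finite group $\bigtimes_i\Grem{\cK_i}$ preserves both finiteness and stabilization.
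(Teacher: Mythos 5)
Your proposal is correct and follows exactly the route the paper intends: the paper treats this corollary as an immediate consequence of Theorem~\ref{thm:3pt} (whose surrounding remarks already note that for large $N$ the only $N$-dependent ingredient in \eqref{eq:oligo} is $M(\kappa,N)$) combined with Theorem~\ref{largeNlimit}. Your added bookkeeping --- stabilization of $A(g_i)$, of $\cS$ via the convergence of $\cS_2$, of the groups $\Grem{\cK_i}$, and $N$-independence of the coefficients $C(\cdot)$, so that the limit passes through a finite sum --- is precisely the justification the paper leaves implicit.
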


\subsection{An example: \texorpdfstring{$S_N$}{SN}}\label{ss:SN}
Let us illustrate the above by briefly discussing the best known example of a permutation orbifold VOA with a large $N$ limit, namely $G_N=S_N$. For this we rephrase the result of \cite{Belin:2015hwa} in the language above. $S_N$ is nested oligomorphic: 
We have $f_K(S_N)=1$ and with $K = |\cK|$
\be
\pstab^{\cK}= S_{N-K} \ ,\qquad \sstab^{\cK}= S_{N-K}\times S_K\ ,
\ee
so that 
\be
\Grem{\cK}^N = S_K\ .
\ee
In particular also $S_N|_{X_{N-1}} = S_{N-1}$. To show that the limit exist, in view of theorem~\ref{largeNlimit} it is enough to show that $ M(\kappa,N)$ converges.

To evaluate $ M(\kappa,N)$, we need to find $|{\kappa_1\cK_1\cup\kappa_2\cK_2\cup\kappa_3\cK_3}|$. Using the same notation as above, a configuration $[\kappa]$ looks something like this:
\begin{eqnarray}
%&&\overbrace{\phantom{\underbrace{\bb\bb\bb\bb\bb\bb\bb\bb\bb\bb\bb\bb\bb\bb}_{K_1}\underbrace{\wb\wb\wb\wb}_{K_2-J}\wb\cdots\wb}}^N\\
\kappa_1g_1:&&\overbrace{\underbrace{\bb\bb\bb\bb\bb\bb\bb\bb\bb\bb\bb\bb\bb\bb}_{K_1}\underbrace{\wb\wb\wb\wb}_{K_2-J}\wb\cdots\wb}^N\nonumber\\
\kappa_2g_2:&&\underbrace{\bb\bb\bb\bb\bb\bb\bb\bb\bb\bb}_J\wb\wb\wb\wb\underbrace{\bb\bb\bb\bb}_{K_2-J}\wb\cdots\wb\label{SNkappa}\\
\kappa_3g_3:&&\underbrace{\bb\bb\bb\bb\bb}_{n_3}\wb\wb\wb\wb\wb\underbrace{\bb\bb\bb\bb\bb\bb\bb\bb}_{K_1+K_2-2J}\wb\cdots\wb
\nonumber
\end{eqnarray}
Here we have used the diagonal action $G_N^{\diag}$ to pick a representative  of $[\kappa]$ that moves the support of all three $g_i$ to the very left. Note that this configuration has either 3,2 or 0 black bullets in each column. We claim that this is in fact the most general configuration that makes a contribution:
Note that $C(\kappa_1 g_1,\kappa_2g_2,\kappa_3g_3)$ vanishes if there is $x\in X_N$ such that $\kappa_i g_i(x)=\vac$ for exactly two of the three indices $i$. We can thus restrict to  configurations $\kappa$ where for all $x\in {\kappa_1\cK_1\cup\kappa_2\cK_2\cup\kappa_3\cK_3}$, $\kappa_i g_i(x)=\vac$ for either one or zero values of $i$. 

For fixed $\kappa$, define  $n_3(\kappa):=|{\kappa_1\cK_1\cap\kappa_2\cK_2\cap\kappa_3\cK_3}|$, that is the number of $x$ such that $\kappa_i g_i(x)\neq\vac$ for $i=1,2,3$. It turns out that because of this, $n_3(\kappa)$ completely determines $|{\kappa_1\cK_1\cup\kappa_2\cK_2\cup\kappa_3\cK_3}|$ \cite{Belin:2015hwa}:
For any subsets $\cK_i$ we have
\begin{align*}
    |{\cK_1\cup\cK_2\cup\cK_3}| & = |\cK_1| + |\cK_2| - |\cK_1 \cap \cK_2| + |\cK_3| - |(\cK_1\cup\cK_2)\cap \cK_3| \\
    & = |\cK_1| + |\cK_2| - |\cK_1 \cap \cK_2| + |\cK_3| - |\cK_1\cap \cK_3| - |\cK_2\cap \cK_3| + |\cK_1\cap \cK_2\cap\cK_3|.
\end{align*}
Due to the remark above about $1$-point functions vanishing, a configuration gives vanishing contribution unless the $\cK_i$ satisfy
\begin{align*}
    |\cK_1| & = |\cK_1 \cap \cK_2| + |\cK_1\cap \cK_3| - |\cK_1\cap \cK_2\cap\cK_3| \\
    |\cK_2| & = |\cK_2 \cap \cK_3| + |\cK_1\cap \cK_2| - |\cK_1\cap \cK_2\cap\cK_3| \\
    |\cK_3| & = |\cK_3 \cap \cK_1| + |\cK_2\cap \cK_3| - |\cK_1\cap \cK_2\cap\cK_3|.
\end{align*}
Adding up gives
\be
|\cK_1 \cap \cK_2| + |\cK_2 \cap \cK_3| + |\cK_3 \cap \cK_1| = \frac{1}{2}(|\cK_1| + |\cK_2| + |\cK_3| + 3|\cK_1\cap \cK_2\cap\cK_3|)\ ,
\ee
which after substituting gives 
\be\label{eq:dim}
|{\kappa_1\cK_1\cup\kappa_2\cK_2\cup\kappa_3\cK_3}|=
\frac12(K_1+K_2+K_3-n_3(\kappa))\ .
\ee 
This formula can of course also be read off in a more graphical manner from (\ref{SNkappa}), where $J:=|\kappa_1\cK_1 \cap \kappa_2\cK_2|$. 
We thus get
\be
 M(\kappa,N) =\left(\frac{(N-K_1)!(N-K_2)!(N-K_3)!}{N!(N-\frac12(K_1+K_2+K_3-n_3(\kappa)))!^2}\right)^{1/2}\ .
\ee
This establishes that $M(\kappa,N)$ only depends on $n_3(\kappa)$, and no other properties of the configuration $\kappa$.
Using Stirling's approximation it follows that for $n_3>0$
\be
 M = O(N^{-n_3(\kappa)/2})
\ee
for $N\to\infty$, and for $n_3(\kappa)=0$ 
\be
 M \to 1\ . 
\ee
This shows that the family $V^{S_N}$ has a large $N$ limit. 

In fact, this establishes an even stronger result: In the large $N$ limit, only configurations with $n_3(\kappa)=0$ contribute. This implies that $S_N$ orbifolds become free VAs in the large $N$ limit in the sense discussed in the introduction. More precisely, let us call fields $\phi_g$ with only one non-vacuum factor, that is with $|\supp(g)|=1$, \emph{single trace} fields. In the large $N$ limit all structure constants of three single trace fields vanish because they have $n_3(\kappa)=1$. The only non-vanishing structure constant is thus the one involving one vacuum, which is simply the norm of the single trace field. These single trace fields generate the VA, and any structure constant can thus be computed in a combinatorial way by using the Jacobi identity. See \cite{Belin:2015hwa} for a more detailed argument.

\section{The limit of \texorpdfstring{$\GL(N,q)$}{GL(N,q)} orbifolds}\label{s:GLNq}
Let us now discuss another class of permutation orbifolds, based on the finite groups $\GL(N,q)$.
Let $q$ be such that $\F_q$ is a field. $\GL(N,q)$ is the general linear group of the finite vector space $\F_q^N$, which has $|\F_q^N|= q^N$ elements. Since any element of $\GL(N,q)$ is a bijective map, the action of $\GL(N,q)$ on $\F_q^N$ defines a permutation group acting on $q^N$ elements. Now let $X_N = \{1,2,\ldots q^N\}\simeq\F_q^N$, where the points in $\F_q^N$ are ordered lexicographically in their coordinates such that we associate to every $v=(v_1, \ldots, v_N) \in \F_q^N$ the element $\sum_{i=1}^N v_i q^{i-1} \in X_N$. In particular, this gives an embedding of the $\F_q^N$ in $\F_q^{N+1}$, and hence a natural embedding of $X_N$ in $X_{N+1}$. Then $\GL(N,q)$ acts on $V^{\F_q^N}$ as a permutation group.
We want to show that $V^{orb(\GL(N,q))}$ has a large $N$ limit. 

\begin{lem}\label{lem:fngrowth}
Let $f_n$ be the number of orbits of $n$-element subsets. We then have:
\be
\log f_n(\GL(N,q)) = \frac{n^2}4 \log q + O(n\log n)\ .
\ee
\end{lem}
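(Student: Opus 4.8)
The plan is to show that $f_n(\GL(N,q))$ is, up to factors negligible on the logarithmic scale, the Galois number $G_n := \sum_{k=0}^n \binom{n}{k}_q$ counting all subspaces of $\F_q^n$, and then that $\log G_n = \tfrac{n^2}{4}\log q + O(\log n)$. Throughout I would take $N\ge n$, the regime relevant to the oligomorphic limit; in fact $f_n(\GL(N,q))$ is already independent of $N$ once $N\ge n$. First I would pass from unordered subsets to ordered tuples: writing $D_n$ for the number of $\GL(N,q)$-orbits of ordered $n$-tuples of \emph{distinct} vectors in $\F_q^N$, every $n$-subset orbit has between $1$ and $n!$ ordered lifts, so $\tfrac{1}{n!}D_n \le f_n(\GL(N,q)) \le D_n$. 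Since $\log n! = O(n\log n)$, this replacement only affects the error term at the target order.

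The conceptual core is to identify ordered-tuple orbits with subspaces. To an ordered tuple $(v_1,\dots,v_n)$ I associate the linear map $\phi:\F_q^n\to\F_q^N$, $e_i\mapsto v_i$, and its kernel $W:=\ker\phi\subseteq\F_q^n$, the space of linear relations among the $v_i$. For $N\ge n\ge \mathrm{rank}\,\phi$ the group $\GL(N,q)$ acts transitively on the maps with a fixed kernel $W$ (any two factor as injections $\F_q^n/W\hookrightarrow \F_q^N$, which are $\GL(N,q)$-conjugate), so two tuples lie in the same orbit iff they have the same $W$. Hence ordered-tuple orbits are in bijection with subspaces $W\subseteq\F_q^n$, and the distinctness condition $v_i\ne v_j$ becomes $e_i-e_j\notin W$ for all $i\ne j$. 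In particular $D_n\le G_n$, which gives the upper bound once $G_n$ is estimated.

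For the Galois number I would use the elementary two-sided bound $q^{k(n-k)}\le \binom{n}{k}_q\le c^{-1}q^{k(n-k)}$ with $c=\prod_{j\ge1}(1-q^{-j})>0$, so that $\log_q\binom{n}{k}_q = k(n-k)+O(1)$ uniformly. Maximizing $k(n-k)$ at $k=\lfloor n/2\rfloor$ and summing the $n+1$ terms gives $\log_q G_n = \tfrac{n^2}{4}+O(\log n)$, hence $\log f_n \le \log G_n = \tfrac{n^2}{4}\log q + O(\log n)$. For the matching lower bound I would show that most middle-dimensional subspaces already satisfy the distinctness constraints: the number of $k$-dimensional subspaces containing a fixed nonzero vector is $\binom{n-1}{k-1}_q$, a fraction $\tfrac{q^k-1}{q^n-1}\le q^{-(n-k)}$ of all of them. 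Taking $k=\lfloor n/2\rfloor$ and applying a union bound over the at most $n^2$ forbidden vectors $e_i-e_j$ (and, if one also insists on nonzero entries, the $e_i$), the fraction of bad subspaces is $\le n^2 q^{-\lceil n/2\rceil}\to 0$, so $D_n\ge \tfrac12\binom{n}{\lfloor n/2\rfloor}_q \ge \tfrac{c}{2}q^{n^2/4-1}$. Combined with $f_n\ge \tfrac1{n!}D_n$ this yields $\log f_n \ge \tfrac{n^2}{4}\log q - O(n\log n)$, and the two bounds give the claim.

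I expect the main obstacle to be the orbit–subspace correspondence of the second paragraph: making the transitivity argument precise across all relevant ranks and confirming that distinctness is exactly $e_i-e_j\notin W$. The accompanying bookkeeping is to check that neither the symmetrization factor $n!$ nor the loss from discarding non-generic subspaces degrades the leading coefficient $\tfrac14$; both are controlled precisely because they contribute only $O(n\log n)$, matching the stated error.
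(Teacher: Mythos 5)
Your proof is correct, but it takes a genuinely different route from the paper's. The paper argues directly on unordered subsets: it uses the $\GL(N,q)$-action to map the $K$ linearly independent vectors of a configuration to unit vectors, bounds the number of rank-$K$ orbits by the $q^{K(n-K)}$ choices of coordinates for the dependent vectors, and sums over $K$ to get $f_n\le (n+1)q^{n^2/4}$; its lower bound comes from the explicit family $\{e_1,\dots,e_{n/2}\}\cup S$ with $S$ an $(n/2)$-subset of the span, giving $\binom{q^{n/2}}{n/2}$ configurations. Your detour through ordered tuples costs a factor $n!$ (harmless at the stated precision) but buys an exact classification: the $\GL(N,q)$-orbit of a tuple is determined by the subspace $W\subseteq\F_q^n$ of relations (the kernel), so counting orbits becomes counting subspaces, and both bounds then follow from the Gaussian-binomial estimate $q^{k(n-k)}\le\binom{n}{k}_q\le c^{-1}q^{k(n-k)}$ together with your union bound over the forbidden vectors $e_i-e_j$. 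The arithmetic heart is the same in both arguments — the leading term $q^{n^2/4}$ comes from maximizing $k(n-k)$ at $k=\lfloor n/2\rfloor$ — but your version is tighter in one respect: the paper's lower bound counts configurations rather than orbits, and a priori distinct choices of $S$ could lie in the same orbit (this is patchable, since the multiplicity of marked representatives per orbit is at most $\binom{n}{n/2}(n/2)!=e^{O(n\log n)}$, but the paper does not address it), whereas your kernel bijection makes the tuple-level count exact, with the only loss being the explicit and controlled $n!$ from symmetrization.
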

\begin{proof}
	Take a set $\cK$ of $n$ vectors in $\F_q^N$. Pick $N\geq n$.
	Denote by $\vK$ the subspace spanned by $\cK$, and let $\dK:=\dim \vK$. We can always find  $\sigma\in \GL(N,q)$ which maps the $\dK$ independent vectors in $\vK$ to the unit vectors $e_1,e_2\ldots e_\dK$. The remaining $n-\dK$ linearly dependent vectors then live in the subspace spanned by the $e_i$, which contains $q^\dK$ vectors. It follows that there are at most $q^{\dK(n-\dK)}$ different orbits. The total number of different orbits is thus bounded by
	\be
	f_n \leq \sum_{\dK=0}^n q^{{\dK(n-\dK)}} \leq (n+1) q^{n^2/4}\ .
	\ee
On the other hand we can bound $f_n$ from below by picking $\dK=n/2$ linearly independent vectors, and choosing the remaining $n/2$ dependent vectors from their span, giving
	\be
	f_n \geq \binom{q^{n/2}}{n/2} \sim q^{n^2/4}n^{-n}\ .
	\ee
\end{proof}

\begin{prop}
	The family $\{\GL(N,q)\}_N$ is nested oligomorphic. Moreover in the limit $N\to\infty$ the $b_n(G_N)$ grow like
	\be\label{bnasymp}
	\log b_n = \alpha n^2 + o(n^2)\ 
	\ee
	for some constant $\alpha$. If the seed $V$ has $\dim V_{(1)} >0$, we have $\alpha = \log q /4$.
\end{prop}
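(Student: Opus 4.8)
The plan is to check the three requirements of Definition~\ref{conv} and then to determine $\log b_n$ by sandwiching it between two estimates that both reduce to Lemma~\ref{lem:fngrowth}.

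\emph{Nested oligomorphicity.} For the nesting condition~(\ref{condnest}) I would use the given coordinate embedding, under which $X_{N-1}=\F_q^{N-1}$ is the hyperplane $W=\{v_N=0\}\subset\F_q^N$. Since elements of $\GL(N,q)$ are linear, $\sigma$ preserves the point set $W$ setwise if and only if it preserves the subspace $W$, so $\sstab^{X_{N-1}}$ is the subgroup stabilizing $W$, and restriction to $W$ gives a homomorphism $\sstab^{X_{N-1}}\to\GL(W)\cong\GL(N-1,q)$ whose kernel is exactly $\pstab^{X_{N-1}}$. This map is onto, because any $\tau\in\GL(W)$ extends by the identity on a chosen complement of $W$; hence $\Grem{X_{N-1}}^N\cong\GL(N-1,q)=G_{N-1}$, which gives~(\ref{condnest}). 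The same restriction-to-span idea yields the second condition of Definition~\ref{conv}: for a fixed finite $\cK$ with span $\vK$ of dimension $\dK$, every $\sigma\in\sstab^{\cK}$ preserves $\vK$ and restricts to an automorphism of $\vK$ fixing $\cK$ setwise, while conversely every such automorphism extends to $\GL(N,q)$ once $N\geq\dK$; thus $\Grem{\cK}^N$ is the group of $\cK$-preserving automorphisms of $\vK$, independent of $N$. Finally, condition~(\ref{condoligo}) follows from Proposition~\ref{condoligo2}, since the bound in Lemma~\ref{lem:fngrowth} is independent of $N$ for $N\geq n$, so each $f_n(\GL(N,q))$ is bounded as $N\to\infty$.

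\emph{Upper bound on $b_n$.} Let $w:=\min\{j\geq1:a_j>0\}$ be the minimal non-vacuum weight. As $V$ is a nontrivial unitary VOA of CFT type, $\omega\in V_{(2)}$ forces $a_2>0$, so $w\in\{1,2\}$, and Virasoro descendants give $a_j>0$ for all $j\geq2$. Any weight-$n$ function $g$ has support size $k=|\supp(g)|\leq n/w$, since each non-vacuum factor carries weight at least $w$. Grouping orbits by their (equivariant) support, the number of orbits with support size $k$ is at most $f_k$ times the number $L(n,k)$ of labelings of $k$ points by non-vacuum vectors of total weight $n$. Because the total weight is fixed at $n$, if $a_j\leq C^j$ (i.e. $a(t)$ has positive radius of convergence) then any product $\prod_i a_{j_i}$ with $\sum_i j_i=n$ is bounded by $C^{n}$, and with at most $2^n$ compositions we get $L(n,k)\leq(2C)^n$. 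Hence
\be
b_n\;\leq\;\sum_{k\leq n/w} f_k\,L(n,k)\;\leq\;(n+1)\,f_{\lfloor n/w\rfloor}\,(2C)^n ,
\ee
so by Lemma~\ref{lem:fngrowth}, $\log b_n\leq \tfrac{1}{4w^2}n^2\log q+O(n\log n)+O(n)$, giving $\limsup_n n^{-2}\log b_n\leq\tfrac{\log q}{4w^2}$.

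\emph{Lower bound and the value of $\alpha$.} Fix $\psi\in\Phi_w$. When $w\mid n$, the functions of support size $k=n/w$ that are constant equal to $\psi$ on their support are permuted by $\GL(N,q)$ exactly as their supports are, so their orbits biject with orbits of $k$-element subsets and $b_n\geq f_{n/w}$. When $w\nmid n$ (only $w=2$, $n$ odd), I would instead place $\psi$ on $k-1=\tfrac{n-3}2$ points and a weight-$3$ vector on one marked point (using $a_3>0$) to realise total weight $n$; forgetting the marked point is a surjection of orbit sets, so $b_n\geq f_{(n-1)/2}$. In both cases Lemma~\ref{lem:fngrowth} gives $\log b_n\geq\tfrac{1}{4w^2}n^2\log q-o(n^2)$. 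Combined with the upper bound this yields $\log b_n=\alpha n^2+o(n^2)$ with $\alpha=\tfrac{\log q}{4w^2}$; in particular $\dim V_{(1)}>0$ means $w=1$ and $\alpha=\tfrac{\log q}4$.

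\emph{Main obstacle.} The essential point is to see that, at the level of the $n^2$ term, $b_n$ is governed purely by the subset-orbit count of Lemma~\ref{lem:fngrowth}: one must show that the freedom in decorating the support with VOA states, together with the spread over supports smaller than $n/w$, contributes only $e^{O(n)}=o(n^2)$ in the logarithm. The observation that makes this work is that the labels carry fixed total weight $n$, which caps the product of dimensions at $C^n$ no matter how the weight is distributed. The remaining care is bookkeeping on the lower bound—hitting weight exactly $n$ by padding when $w\nmid n$, and checking that marking then forgetting a point is orbit-surjective—while the group theory behind nested oligomorphicity reduces to routine linear algebra over $\F_q$.
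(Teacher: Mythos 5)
Your proof is correct, and although it follows the same skeleton as the paper's (nested oligomorphicity via Proposition~\ref{condoligo2} and Lemma~\ref{lem:fngrowth}; the growth of $b_n$ sandwiched between subset-orbit counts), it genuinely sharpens the growth estimate. The paper bounds $b_n \leq A(n)^n f_n(G_N)$ using the asserted estimate $A(n) < e^{C\sqrt{n}}$, which gives the upper bound $\tfrac{n^2}{4}\log q + O(n^{3/2})$ regardless of the minimal non-vacuum weight $w$, and for the lower bound uses constant functions valued in $V_{(1)}$ (or in $\C\omega$ when $\dim V_{(1)}=0$), giving $b_n \geq f_n$ resp.\ $b_{2n}\geq f_n$. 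When $\dim V_{(1)}=0$ these two bounds do not match ($\tfrac{1}{16}\log q$ versus $\tfrac14\log q$), so the paper's own proof does not actually pin down a single constant $\alpha$ in that case; your version does, because you track the support-size constraint $|\supp(g)|\leq n/w$ in the upper bound and pad with a weight-$3$ vector for odd $n$ in the lower bound, yielding matching bounds and the explicit value $\alpha = \log q/(4w^2)$. Your linear-algebra treatment of the stabilizers (restrict to the span, extend by the identity on a complement) is also more careful than the paper's bare assertions $\sstab^{\cK}\simeq \GL(\dK,q)\times\GL(N-\dK,q)$ and $\pstab^{\cK}\simeq\GL(N-\dK,q)$, which are not literally correct: the pointwise stabilizer of $\vK$ contains in addition a unipotent block of order $q^{\dK(N-\dK)}$, and the group induced on $\cK$ is only the subgroup of $\GL(\vK)$ preserving $\cK$ setwise. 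What your argument extracts --- stabilization of $\Grem{\cK}^N$ for large $N$, and $\Grem{X_{N-1}}^N = G_{N-1}$ --- is exactly what Definition~\ref{conv} requires, and is proved cleanly.

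Two small caveats, neither fatal. First, your hypothesis $a_j \leq C^j$ is an unproven assumption on the seed character, but it is weaker than (hence implied by) the estimate $A(n) < e^{C\sqrt{n}}$ that the paper itself invokes without proof, so your argument is conditional in the same way and to a lesser degree. Second, the step $f_k \leq f_{\lfloor n/w\rfloor}$ for all $k \leq n/w$ uses monotonicity of $f_k$ in $k$, which is not contained in the statement of Lemma~\ref{lem:fngrowth}; either replace it by the explicit bound $f_k \leq (k+1)q^{k^2/4}$ from the lemma's proof (which is monotone), or apply the lemma's asymptotics uniformly over $k\leq n/w$ to bound $\max_{k\leq n/w}\log f_k \leq \tfrac{n^2}{4w^2}\log q + O(n\log n)$. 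Either repair leaves your error terms, and hence your conclusion, unchanged.
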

\begin{proof}
With the same notation as above, we have
\be
G_N^{\cK} \simeq \GL(\dK,q)\times \GL(N-\dK,q)\ 
\ee
and 
\be\label{GLNLhat}
\hat G_N^{\cK} \simeq \GL(N-\dK,q)\ 
\ee
such that $\Grem{\cK}= \GL(\dK,q)$.
	Clearly we also have $\GL(N,q)|_{X_{N-1}} = \GL(N-1,q)$, so that using proposition~\ref{condoligo2}, we use lemma~\ref{lem:fngrowth} to establish that the $f_n(\GL(N,q))$ are bounded.
	
	To show  (\ref{bnasymp}), note that
	in (\ref{bnfn}) we can bound $A(n) < e^{C\sqrt{n}}$, which gives an upper bound on $b_n$ of the form (\ref{bnasymp}). On the other hand we can use the lower bound $f_n$ to establish the lower bound on $b_n$. If $\dim V_{(1)}>0$, then we take the function $g(x)=a\in V_{(1)}\forall x \in \supp(g)$, and else 
	 $g(x)=\omega\in V_{(2)}\forall x \in \supp(g)$, with $\omega$ the conformal vector. This vector has weight $n$ and $2n$ respectively, which leads to the bounds $b_n \geq f_n$ and $b_{2n}\geq f_n$.
\end{proof}
We can now use nested oligomorphicity to establish that a $N\to\infty$ limit exists:
\begin{prop}
The fixed point VOA for the orbifold group $\GL(N,q)$ has a large $N$ limit.
\end{prop}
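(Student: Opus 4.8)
The plan is to invoke Theorem~\ref{largeNlimit}: since the preceding proposition shows that $\{\GL(N,q)\}_N$ is nested oligomorphic, it remains only to verify that the fixed-point structure constants $f^N_{g_1g_2g_3}$ converge for every triple $g_1,g_2,g_3$. By Theorem~\ref{thm:3pt}, every ingredient of the expression~\eqref{eq:oligo} for $f^N$ other than $M(\kappa,N)$ stabilizes for large $N$: the index set $\cS$ becomes $N$-independent and admits $N$-independent representatives, the normalizations $A(g_i)$ are eventually constant, and the inner sums $\sum_{[\sigma]}C(\kappa_1\sigma_1g_1,\kappa_2\sigma_2g_2,\kappa_3\sigma_3g_3)$ involve only the finitely many support positions and the fixed groups $\Grem{\cK_i}=\GL(d_i,q)$. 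Moreover, if the support condition fails — i.e.\ if some vector lies in exactly one of the three supports — then the vanishing of one-point functions forces $C=0$ for every $\sigma$, so that term is identically zero for all $N$. Hence the whole problem reduces to showing that $M(\kappa,N)$ of~\eqref{MGLmq} converges for each class $[\kappa]$ that contributes a nonzero inner sum.

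First I would express the group orders through dimensions of spans. Writing $W_i=\langle\cK_i\rangle$ and $d_i=\dim W_i$, the pointwise stabilizer $\pstab^{\cK_i}$ is exactly the subgroup fixing $W_i$ elementwise, so a Levi decomposition gives $|\pstab^{\cK_i}|=q^{d_i(N-d_i)}\,|\GL(N-d_i,q)|$, and likewise $|\pstab^{\kappa_1\cK_1\cup\kappa_2\cK_2\cup\kappa_3\cK_3}|=q^{D(N-D)}\,|\GL(N-D,q)|$ with $D:=\dim\langle\kappa_1\cK_1\cup\kappa_2\cK_2\cup\kappa_3\cK_3\rangle$. Substituting these and $|G_N|=|\GL(N,q)|$ into~\eqref{MGLmq}, and using $|\GL(m,q)|=q^{m^2}\prod_{j=1}^m(1-q^{-j})$, the $q$-Pochhammer factors assemble into a ratio tending to $1$, since $\prod_{j\ge1}(1-q^{-j})$ converges to a finite nonzero constant. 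Bookkeeping the powers of $q$ then yields
\be
M(\kappa,N)^2 = q^{\,N\left(2D-d_1-d_2-d_3\right)}\bigl(1+o(1)\bigr)\qquad(N\to\infty)\ .
\ee
Thus $M(\kappa,N)$ converges precisely when $2D\le d_1+d_2+d_3$, tending to $0$ when the inequality is strict and to $1$ in the equality case.

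The crux is therefore the linear-algebra inequality $2D\le d_1+d_2+d_3$ for every contributing configuration, the $\GL(N,q)$ analogue of the inclusion--exclusion identity of Section~\ref{ss:SN}. I would deduce it from the same input: nonvanishing of $C$ forces every vector of $\kappa_1\cK_1\cup\kappa_2\cK_2\cup\kappa_3\cK_3$ to lie in at least two of the supports $\cK_i':=\kappa_i\cK_i$, whence $\cK_i'=(\cK_i'\cap\cK_j')\cup(\cK_i'\cap\cK_k')$ and so $\langle\cK_i'\rangle=U_{ij}+U_{ik}$, where $U_{ij}:=\langle\cK_i'\cap\cK_j'\rangle$. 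Consequently $\sum_i d_i=\sum_{\mathrm{cyc}}\dim(U_{ij}+U_{ik})=2s-P$ and $D=\dim(U_{12}+U_{13}+U_{23})$, where $s,P,t$ denote respectively the total, pairwise-intersection, and triple-intersection dimensions of the $U_{ij}$. The three-subspace submodularity bound $D\le s-P+t$, combined with the elementary fact $P\ge 3t$, gives $2D\le 2s-2P+2t\le 2s-P=\sum_i d_i$, as required.

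The main obstacle is precisely this inequality. Unlike cardinalities of sets, dimensions of subspace sums satisfy inclusion--exclusion only as an inequality, so one cannot simply transcribe the exact $S_N$ relation of Section~\ref{ss:SN}; instead one must show that the unavoidable slack is controlled by the triple overlap, which is exactly what the bound $P\ge 3t$ supplies. Everything else — the pointwise-stabilizer orders, the convergence of the $q$-Pochhammer product, and the reduction of $f^N$ to $M(\kappa,N)$ — is routine given Theorems~\ref{thm:3pt} and~\ref{largeNlimit}.
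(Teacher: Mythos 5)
Your proof is correct, and its skeleton is the same as the paper's: establish nested oligomorphicity (quoted from the preceding proposition), reduce everything via Theorem~\ref{largeNlimit} and Theorem~\ref{thm:3pt} to the convergence of $M(\kappa,N)$, compute the stabilizer orders using $|\GL(m,q)|=q^{m^2}\phi(q^{-1})(1+O(q^{-m}))$, and close with the linear-algebra inequality $2D\le d_1+d_2+d_3$ for contributing configurations, which both you and the paper extract from the vanishing of one-point functions. Two of your steps, however, genuinely differ in execution, and both are worth recording. First, your pointwise-stabilizer count $|\pstab^{\cK_i}|=q^{d_i(N-d_i)}\,|\GL(N-d_i,q)|$ is the correct (parabolic) one: the subgroup of $\GL(N,q)$ fixing a $d$-dimensional subspace pointwise contains the unipotent block $\Hom(\F_q^{N-d},\F_q^{d})$ in addition to the Levi factor $\GL(N-d,q)$. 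The paper's equation~(\ref{GLNLhat}) omits the factor $q^{d(N-d)}$; with your count the powers of $q$ assemble to the clean exponent $M(\kappa,N)^2=q^{N(2D-d_1-d_2-d_3)}(1+o(1))$, so that $M\to 1$ exactly in the equality case, whereas taking (\ref{GLNLhat}) at face value would leave a spurious residual factor $q^{(d_1^2+d_2^2+d_3^2-2D^2)/2}$ there. Your computation thus not only reproduces but tightens the paper's. Second, you prove the key inequality by passing to the spans $U_{ij}$ of the pairwise set-intersections, writing $\sum_i d_i=2s-P$, $D\le s-P+t$ (subspace submodularity) and $P\ge 3t$; the paper instead applies the same submodularity directly to the spans $\vK_i$ and combines it with the equalities $d_i=\dim(\vK_i\cap\vK_j)+\dim(\vK_i\cap\vK_k)-\dim(\vK_1\cap\vK_2\cap\vK_3)$, which hold because the support-covering condition forces $\vK_i=(\vK_i\cap\vK_j)+(\vK_i\cap\vK_k)$. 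The two derivations are equally rigorous and rest on the same two inputs (submodularity plus one-point vanishing); the paper's bookkeeping has the small advantage of exhibiting the decay rate of $M$ in terms of $n_3(\kappa)=\dim\langle\kappa_1\cK_1\cap\kappa_2\cK_2\cap\kappa_3\cK_3\rangle$, which is what later shows that the limit VA is free, while your argument yields only the dichotomy $M\to 0$ or $M\to 1$ --- but that dichotomy is all the stated proposition requires.
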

\begin{proof}
To evaluate $ M(\kappa, N)$, we use (\ref{GLNLhat}) together with the asymptotic expression 
\be
|\GL(N,q)| = q^{N^2}\phi(q^{-1})(1+ O(q^{-N}))\ ,
\ee
where $\phi(q)$ is Euler's function
\[
\phi(q) = \prod_{n=1}^{\infty}(1-q^n).
\]
To obtain $|\pstab^{\kappa_1\cK_1\cup\kappa_2\cK_2\cup\kappa_3\cK_3}| = |\GL(N - \dim (\vK_1+\vK_2+\vK_3),q)|$, we repeat an argument similar to  section~\ref{ss:SN}. Namely we have the inequality
\begin{multline}
    \dim (\vK_1+\vK_2+\vK_3)  = \dK_1+\dK_2 - \dim(\vK_1 \cap \vK_2) + \dK_3 - \dim((\vK_1+\vK_2)\cap \vK_3) \\
     \leq \dK_1+\dK_2 + \dK_3- \dim(\vK_1 \cap \vK_2) - \dim(\vK_2 \cap \vK_3)-\dim(\vK_1 \cap \vK_3) + 
     \dim(\vK_1 \cap \vK_2 \cap \vK_3)\ .
\end{multline}
Next, because again $1$-point functions vanish, it follows that $\vK_1 \subset \vK_2 +\vK_3$. This means that we have
\be
\dK_1  =  \dim(\vK_1 \cap \vK_2) + \dim(\vK_1 \cap \vK_3) - 
     \dim(\vK_1 \cap \vK_2 \cap \vK_3)\ ,
\ee
and similar for $\dK_2$ and $\dK_3$. Adding everything together as above gives
\be
\dim (\vK_1+\vK_2+\vK_3) \leq \frac12(\dK_1+\dK_2+\dK_3-\dim(\vK_1 \cap \vK_2 \cap \vK_3))\ .
\ee
Defining $n_3(\kappa) := \dim \langle \kappa_1\cK_1\cap\kappa_2\cK_2\cap\kappa_3\cK_3\rangle $,
we have
\be
\dim \langle \kappa_1\cK_1\cup\kappa_2\cK_2\cup\kappa_3\cK_3\rangle\leq 
\frac12(K_1+K_2+K_3-n_3(\kappa))\ .
\ee 
For $n_3>0$ this gives the asymptotic expression
\be
 M = O(q^{-Nn_3})\ ,
\ee
and for $n_3=0$ and $\dim \langle \kappa_1\cK_1\cup\kappa_2\cK_2\cup\kappa_3\cK_3\rangle=\frac12(K_1+K_2+K_3)$ we have
\be
M \to 1\ .
\ee
This again shows that three point functions converge as $N\to \infty$, and also that $\Vi$ is free.
\end{proof}

The theory of extensions of rational VOAs was described in \cite{2015CMaPh.337.1143H}. Specifically, a rational VOA can be extended by adjoining a suitable set of its modules. Here we are mainly interested in the question if the fixed point VOA $V^{G_N}$ of a holomorphic VOA, which is still rational but no longer holomorphic, can be extended back to a holomorphic VOA $V^{orb(G_N)}$.  \cite{Evans:2018qgz} establish necessary and sufficient conditions for the existence of such holomorphic extensions of orbifolds of holomorphic VOAs. In particular, by their results there are holomorphic extensions $V^{orb(\GL(N,q))}$ for all $N$. We want to establish that these have the following limit:
\begin{thm}\label{Vorblimit}
Let $V$ be a unitary holomorphic VOA of CFT type with $c\in24\N$. Then the family of holomorphic orbifolds $V^{orb(\GL(N,q))}$ has a large $N$ VA limit $\Vi^{orb}$ which is equal to the limit $\Vi$ of the fixed point VOAs $V^{\GL(N,q)}$.
\end{thm}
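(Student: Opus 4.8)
The plan is to reduce Theorem~\ref{Vorblimit} to a statement purely about graded vector spaces: that in the limit $N\to\infty$ the only twisted sector contributing at any fixed conformal weight is the untwisted one, so that $V^{orb(\GL(N,q))}$ and $V^{\GL(N,q)}$ have identical graded pieces below any weight cutoff once $N$ is large. First I would recall the structure of the holomorphic extension guaranteed by \cite{Evans:2018qgz} (this is where $c\in24\N$ enters): as a graded vector space $V^{orb(G_N)}=\bigoplus_{[\sigma]}W_{[\sigma]}$, where $[\sigma]$ runs over conjugacy classes of $G_N$, the untwisted class gives $W_{[\id]}=V^{G_N}$, and each $W_{[\sigma]}$ is an appropriately projected $\sigma$-twisted module. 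For a permutation orbifold the $\sigma$-twisted module factorizes over the cycles of $\sigma$, and by the theory of cyclic permutation orbifolds a single $\ell$-cycle acting on $V^{\otimes\ell}$ produces a twisted module whose ground state sits at conformal weight $\tfrac{c(\ell^2-1)}{24\ell}$. Hence the minimal conformal weight in the $\sigma$-twisted sector is $h_\sigma=\tfrac{c}{24}\sum_j(\ell_j-\ell_j^{-1})$, summed over the cycle lengths $\ell_j$ of $\sigma$, and the projection to $W_{[\sigma]}$ can only remove states, never lower this minimum.

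Second, I would bound $h_\sigma$ from below for every nontrivial $\sigma$. A point lying in a cycle of length $\ell\geq2$ contributes $\tfrac{c}{24}(1-\ell^{-2})\geq\tfrac{c}{24}\cdot\tfrac34=\tfrac{c}{32}$ to $h_\sigma$, so $h_\sigma\geq\tfrac{c}{32}\,p(\sigma)$, where $p(\sigma)$ is the number of non-fixed points of $\sigma$ on $\F_q^N$. The fixed-point set of $\sigma$ is the subspace $\ker(\sigma-\id)$, whose dimension is at most $N-1$ whenever $\sigma\neq\id$; thus $p(\sigma)=q^N-|\ker(\sigma-\id)|\geq q^N-q^{N-1}=q^{N-1}(q-1)$. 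Therefore every nontrivial $\sigma\in\GL(N,q)$ satisfies $h_\sigma\geq\tfrac{c(q-1)}{32}\,q^{N-1}$, which diverges as $N\to\infty$, and crucially does so uniformly over all conjugacy classes $[\sigma]\neq[\id]$.

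Third, I would assemble these into the limit statement. Fix a weight $\Lambda$. By the bound above there is $N_\Lambda$ such that for all $N\geq N_\Lambda$ every twisted sector $W_{[\sigma]}$ with $\sigma\neq\id$ has its lowest state strictly above weight $\Lambda$; consequently $(V^{orb(\GL(N,q))})_{(n)}=(V^{\GL(N,q)})_{(n)}$ for all $n\leq\Lambda$ and $N\geq N_\Lambda$. Moreover, since the state-field map sends states of weights $\wt(b),\wt(c)$ to a sum of states of weight $\wt(b)+\wt(c)-k-1$ with $k$ bounded below, every structure constant among states of bounded weight is computed entirely inside the untwisted (fixed point) sector. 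Hence, at any fixed triple of weights, the structure constants of $V^{orb(\GL(N,q))}$ coincide with those of $V^{\GL(N,q)}$ for all sufficiently large $N$. Applying Theorem~\ref{largeNlimit} to $V^{\GL(N,q)}$, the limits of these structure constants exist, so the limit VA $\Vi^{orb}$ exists and equals $\Vi$.

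The step I expect to be the main obstacle is the first: making rigorous and uniform the decomposition of the holomorphic orbifold and the ground-state weight $h_\sigma$ of a general $\sigma$-twisted module in the generality of \cite{Evans:2018qgz}, in particular confirming that the cycle factorization together with the $\tfrac{c(\ell^2-1)}{24\ell}$ shift holds and that the centralizer projection defining $W_{[\sigma]}$ does not introduce states below $h_\sigma$. Once the lower bound $h_\sigma\gtrsim q^{N-1}$ is secured, the remainder is the same finite-weight stabilization argument already established for the fixed point VOA, and the conclusion $\Vi^{orb}=\Vi$ follows formally.
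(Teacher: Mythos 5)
Your proposal follows essentially the same route as the paper's proof: decompose the holomorphic extension into (projections of) twisted sectors, use the cycle-type formula for the ground-state weight of permutation-twisted modules to extract a contribution of at least $c/32$ per non-fixed point, show that every nontrivial element of $\GL(N,q)$ moves a number of points that diverges with $N$, and conclude that all twisted sectors decouple below any fixed weight so that $\Vi^{orb}=\Vi$ follows from Theorem~\ref{largeNlimit}. The only real difference is your count of non-fixed points, $q^N - |\ker(\sigma-\id)| \geq q^{N-1}(q-1)$ via the proper-subspace argument, which is sharper than the paper's bound of $(q-1)N$ obtained from basis vectors; both diverge, which is all that is needed.
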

 To prove this, we will establish that the conformal weight of a vector in $V^{orb(\GL(N,q))}$ remains finite in the limit $N\to\infty$ if and only if this vector comes from $V^{\GL(N,q)}$ itself rather than from a $V^{\GL(N,q)}$-module. Vectors of diverging conformal weight, that is all contributions from $V^{\GL(N,q)}$-modules, then do not contribute in the large $N$ limit, so that the limites of the orbifold $V^{\GL(N,q)}$ and its holomorphic extension $V^{orb(\GL(N,q))}$ coincide. 
 
 It was shown in \cite{MR3715704} that any irreducible $V^{\GL(N,q)}$-module arises as a $V^{\GL(N,q)}$-submodule of a irreducible $g$-twisted $V^{\F_q^N}$-module for some $g \in \GL(N,q)$. We will make use of this fact and show that the conformal weights of all relevant twisted $V^{\F_q^N}$-modules diverge.
 To this end we establish the following results:

\begin{lem}{\label{lem:conf_wgt}}
    Let $g \in S_N$ be a permutation of order $n$ and cycle type $C_g = \prod_{t\mid n}t^{b_t}$ acting on the holomorphic VOA $V^{\otimes N}$ of central charge $Nc$. Then the conformal weight of the unique irreducible $g$-twisted module is given by
    \begin{equation}
        \rho_g = \frac{c}{24}\sum_{t \mid n}b_t\big(t-\frac{1}{t}\big).
    \end{equation}
\end{lem}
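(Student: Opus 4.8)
The plan is to reduce to a single cycle and then sum. First I would decompose $\{1,\dots,N\}$ into $\langle g\rangle$-orbits; by hypothesis there are $b_t$ orbits of length $t$ for each $t\mid n$, and this gives a $g$-equivariant factorization
\[
V^{\otimes N}\;\cong\;\bigotimes_{\mathcal{O}}V^{\otimes|\mathcal{O}|},
\]
on which $g$ acts on each factor $V^{\otimes t}$ as a single $t$-cycle $g_t$. Since $V$ is holomorphic, so is $V^{\otimes N}$, and it admits a unique irreducible $g$-twisted module; because $g$ preserves the tensor decomposition, this module is the tensor product of the unique irreducible $g_t$-twisted modules of the factors (this is the factorization of twisted sectors for permutation orbifolds of Barron--Dong--Mason). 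As $L(0)$ is additive over tensor factors, the conformal weight is additive too, so $\rho_g=\sum_{t\mid n}b_t\,\rho_t$, where $\rho_t$ is the conformal weight of the $g_t$-twisted module of $V^{\otimes t}$. The lemma therefore reduces to the single-cycle identity $\rho_t=\frac{c}{24}(t-\frac1t)$.

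The heart of the argument is this single-cycle weight, for which I would use the cyclic-orbifold (branched-cover) description of the $g_t$-twisted module. As a graded vector space it is isomorphic to $V$, and under this isomorphism the total Virasoro action is rescaled, with the zero mode becoming
\[
L^{\mathrm{tw}}(0)\;\longleftrightarrow\;\tfrac1t\,L(0)+\tfrac{c(t^2-1)}{24t}.
\]
The additive shift is universal, fixed by the central charge alone: it is the anomalous term generated by the $t$-fold covering map $z=w^t$, whose Schwarzian derivative is $\{z,w\}=-\tfrac{t^2-1}{2w^2}$, so that the image of the vacuum of $V$ (with $L(0)=0$) acquires conformal weight $\tfrac{c(t^2-1)}{24t}=\tfrac{c}{24}(t-\tfrac1t)$. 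As a sanity check, for the free boson $c=1$ the discrete Fourier transform diagonalizes $g_t$ into modes with modings $\tfrac kt$, $k=0,\dots,t-1$, and the ground-state energies sum to $\sum_{k=0}^{t-1}\tfrac14\tfrac kt(1-\tfrac kt)=\tfrac{t^2-1}{24t}$, reproducing the familiar $1/16$ for $t=2$.

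Assembling the pieces gives
\[
\rho_g=\sum_{t\mid n}b_t\,\rho_t=\frac{c}{24}\sum_{t\mid n}b_t\Big(t-\frac1t\Big),
\]
as claimed. The main obstacle is the single-cycle weight $\rho_t$: establishing the isomorphism of the $g_t$-twisted module with $V$ and pinning down that the central shift depends only on $c$ (and not on the internal structure of $V$) rigorously, rather than through the free-field or heuristic Schwarzian computation, is the one place that genuinely uses the permutation-orbifold twisted-module machinery. The reduction and the final summation are then routine.
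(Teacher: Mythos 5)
Your proof is correct, but it takes a genuinely different route from the paper's. The paper's proof is a one-line character argument: the twining character of $g$ on $V^{\otimes N}$ factorizes over cycles as $\prod_{t\mid n}\chi_V(t\tau)^{b_t}$, its modular $S$-transform is the character of the unique irreducible $g$-twisted module, and each factor becomes $\chi_V(\tau/t)$, whose expansion starts at $q^{-c/(24t)}$; comparing with the overall vacuum shift $-Nc/24=-\sum_t b_t tc/24$ gives exactly $\rho_g=\frac{c}{24}\sum_{t\mid n} b_t\big(t-\frac1t\big)$. You instead construct the twisted module itself: cycle decomposition, the Barron--Dong--Mason identification of the single-cycle twisted module with $V$ carrying $L^{\mathrm{tw}}(0)=\frac1t L(0)+\frac{c(t^2-1)}{24t}$, and additivity of conformal weights over tensor factors. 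Both arguments import serious machinery --- yours the BDM construction (which, as you correctly flag, is the rigorous substitute for the Schwarzian heuristic), the paper's the modular invariance of twisted and twining trace functions (Zhu, Dong--Li--Mason) together with the fact that the $S$-transform of the twining character is the twisted character. Your route yields more, namely the full graded structure of the twisted sector rather than just its lowest weight, at the cost of length; the paper's is shorter but only produces the number $\rho_g$. Two points you should make explicit to close your sketch: reading off $\rho_t=\frac{c}{24}\big(t-\frac1t\big)$ from $L^{\mathrm{tw}}(0)$ uses that $V$ is of CFT type, so that $L(0)\geq 0$ with minimum $0$ attained at the vacuum --- this is among the paper's standing assumptions, but your argument needs it; and the identification of the tensor product of the cycle-twisted modules with \emph{the} irreducible $g$-twisted module of $V^{\otimes N}$ rests on uniqueness, i.e.\ on each factor $V^{\otimes t}$ being holomorphic and on the fact that a tensor product of irreducible twisted modules over the factors is an irreducible twisted module over the product, which you assert but should justify or cite.
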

\begin{proof}
This follows from taking the $S$ transform of the twining character with $g$ inserted, and reading off the conformal weight from its expansion.
\end{proof}
\begin{cor}{\label{cor:rho_min}}
 Let $g$ have $N-r$ $1$-cycles. Then the conformal weight of the irreducible $g$-twisted module is bounded from below by
 \begin{equation}
     \rho_g > \frac{rc}{32}.
 \end{equation}
\end{cor}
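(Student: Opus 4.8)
The plan is to derive the corollary directly from the closed formula in Lemma~\ref{lem:conf_wgt} by an elementary term-by-term estimate of the sum over cycle lengths. First I would record the two combinatorial identities linking $r$ to the cycle type $C_g = \prod_{t\mid n} t^{b_t}$. Since $g$ has $N-r$ fixed points we have $b_1 = N-r$, and since the cycles partition the $N$ points we have $N = \sum_{t\mid n} t\, b_t$. Subtracting yields
\[
r = \sum_{t\geq 2} t\, b_t,
\]
so $r$ is exactly the number of points lying in cycles of length at least two.

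Next I would substitute the expression of Lemma~\ref{lem:conf_wgt} and observe that the length-one cycles contribute nothing, as $b_1(1-1) = 0$; hence the sum effectively runs over $t\geq 2$. The key step is then the purely numerical inequality $t - \tfrac1t = t\bigl(1 - \tfrac1{t^2}\bigr) \geq \tfrac34\,t$, valid for all $t\geq 2$ because $1/t^2 \leq 1/4$ there. Since $V$ is unitary of CFT type (indeed $c\in 24\N$ in the setting of Theorem~\ref{Vorblimit}), we have $c>0$, so every summand is nonnegative and I may estimate term by term:
\[
\rho_g = \frac{c}{24}\sum_{t\geq 2} b_t\Bigl(t-\tfrac1t\Bigr) \;\geq\; \frac{c}{24}\cdot\frac34\sum_{t\geq 2} t\, b_t \;=\; \frac{c}{32}\,r.
\]

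There is no genuine obstacle in this computation; the one point worth flagging concerns strictness. The inequality $t-\tfrac1t\geq\tfrac34 t$ is an equality precisely at $t=2$, so the displayed bound becomes an equality exactly when every nontrivial cycle of $g$ is a transposition, and is strict as soon as $g$ has at least one cycle of length $\geq 3$. Thus the argument delivers the sharp bound $\rho_g \geq rc/32$, with the constant $1/32 = \tfrac{1}{24}\min_{t\geq 2}(1-1/t^2)$ being optimal; for the intended application, namely that $\rho_g\to\infty$ as $r\to\infty$, this estimate is exactly what is needed.
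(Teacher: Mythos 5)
Your proof is correct and takes essentially the same approach as the paper: the paper likewise reduces to the per-element contribution of a $t$-cycle, namely $\frac{c}{24}\,\frac{t^2-1}{t^2}$, and bounds it below by $\frac{c}{32}$ for $t\geq 2$, which is exactly your inequality $t-\tfrac1t\geq\tfrac34 t$ repackaged. Your remark on strictness is well taken and is in fact a small correction to the paper: since equality holds at $t=2$, the honest conclusion is $\rho_g \geq rc/32$ (with equality when every nontrivial cycle is a transposition), whereas the paper asserts a strict inequality; this discrepancy is harmless for the intended application in Corollary~\ref{cor:nolight}.
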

\begin{proof}
Lemma \ref{lem:conf_wgt} shows that for every $t$-cycle of $g$ there is a contribution of $\frac{c}{24}\big(t-\frac{1}{t}\big)$ to the conformal weight. We thus find that the contribution per element in a $t$-cycle is given by
\begin{equation}
    \rho_t = \frac{c}{24}\frac{t^2-1}{t^2} > \frac{c}{32}.
\end{equation}
\end{proof}

\begin{cor}\label{cor:nolight}
 Let $\GL(N,q)$ act on $V^{\otimes q^N}$ by permutation. Then for any non-identity element $g \in \GL(N,q)$ the conformal weight of the $g$-twisted sector is bounded from below by
 \begin{equation}
     \rho_g > \frac{(q-1)Nc}{32}.
 \end{equation}
\end{cor}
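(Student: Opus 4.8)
The plan is to deduce this directly from Corollary~\ref{cor:rho_min} by counting how many of the $q^N$ points $g$ actually moves. First I would observe that, under the permutation action on $\F_q^N$, the fixed points of $g$ are exactly the solutions of $gv=v$, that is the subspace $\ker(g-I)\subset\F_q^N$. In the cycle language of Corollary~\ref{cor:rho_min} these are precisely the $1$-cycles, and their number is therefore $q^{\dim\ker(g-I)}$.

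The key step is then a one-line linear-algebra bound: since $g\neq I$ forces $g-I\neq 0$, we have $\dim\ker(g-I)\le N-1$, so $g$ has at most $q^{N-1}$ fixed points. Writing $r$ for the number of moved points (equivalently, the number of points lying in cycles of length $\ge 2$), this gives
\[
r = q^N - q^{\dim\ker(g-I)} \ge q^N - q^{N-1} = (q-1)q^{N-1}\ .
\]

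Finally, Corollary~\ref{cor:rho_min} applies verbatim to the permutation action of $g$ on the $q^N$ points---its proof is cycle-by-cycle and does not depend on the total number of points in the underlying set---so $\rho_g > \tfrac{rc}{32}$. Substituting the lower bound for $r$ and using $q^{N-1}\ge N$ for $q\ge 2$ yields
\[
\rho_g > \frac{(q-1)q^{N-1}c}{32} \ge \frac{(q-1)Nc}{32}\ ,
\]
which is in fact stronger than the claimed bound. I do not expect any serious obstacle here; the only points requiring care are the identification of the fixed-point set with the subspace $\ker(g-I)$ (which is what turns the count into a clean power of $q$) and the remark that the per-cycle estimate of Corollary~\ref{cor:rho_min} is insensitive to the size of the underlying set. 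The entire content sits in the inequality $\dim\ker(g-I)\le N-1$, which expresses that a nontrivial linear automorphism can fix at most a hyperplane's worth of vectors, so that a positive fraction $(q-1)/q$ of all $q^N$ points must be moved.
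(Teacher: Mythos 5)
Your proof is correct, and it reaches a strictly stronger conclusion than the paper via a different counting argument. Both you and the paper reduce the statement to Corollary~\ref{cor:rho_min} applied to the permutation action on the $q^N$ tensor factors (and your remark that the per-cycle estimate is insensitive to the size of the underlying set is exactly right); the difference lies entirely in how the moved points are counted. The paper argues combinatorially from a basis: since $g\neq \mathrm{id}$ some basis vector $v_1$ is moved; then for each $i\geq 2$ either $v_i$ or $v_1+v_i$ is moved, and scaling by $\F_q^*$ yields at least $(q-1)N$ pairwise distinct moved vectors --- a count linear in $N$, which is exactly what the stated bound $\rho_g > (q-1)Nc/32$ requires. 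You instead observe that the fixed-point set of $g$ is the subspace $\ker(g-I)$, which is proper because $g\neq I$, so there are at most $q^{N-1}$ fixed points and hence at least $(q-1)q^{N-1}$ moved points. This one-line linear-algebra argument is both simpler and exponentially stronger: it gives $\rho_g > (q-1)q^{N-1}c/32$, of which the paper's bound is a weak consequence via $q^{N-1}\geq N$. For the purpose of Theorem~\ref{Vorblimit} both suffice, since all that is needed is $\rho_g\to\infty$ for $g\neq\mathrm{id}$; your version additionally shows that the twisted-sector conformal weights diverge exponentially in $N$ rather than merely linearly.
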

\begin{proof}
Let $\{v_i\}$ be a basis of $\mathbb{F}_q^N$. If $g \in \mathrm{GL}(N,q)$ is not the identity there is a non-invariant basis element $v_1$. Now for any other basis element $v_i$ either $v_i$ is  non-invariant or $v_1 + v_i$ is non-invariant. In particular, if any element $v$ is not invariant, neither is $\alpha v$ for any $\alpha \in \mathbb{F}_q^*$. Thus $\mathbb{F}_q^N$ contains at least $(q-1)N$ non-invariant elements and the desired result follows from Corollary \ref{cor:rho_min}.
\end{proof}
This establishes theorem~\ref{Vorblimit}.

Finally, let us briefly discuss the relevance of the $\GL(N,q)$ orbifold for the growth of $b_n$. For general permutation orbifolds, the twisted modules tend to give exponential growth. More precisely, if there is  a conjugacy class in $G_N$ of total non-trivial cycle length $n$, then usually one gets  $\log b_n \sim 2\pi n$ \cite{Belin:2014fna}. This is in particular what happens for the $S_N$ orbifold \cite{Keller:2011xi}. The $\GL(N,q)$ orbifold circumvents this because all its non-trivial conjugacy classes $g$ give modules which satisfy corollary~\ref{cor:nolight}. However, despite not having any contributions from the twisted modules, the $b_n(\GL(N,q))$ still grow faster than the $b_n(S_N)$, as can be seen in figure~\ref{f:bn}. Nonetheless, the $\GL(N,q)$ represents a new growth behavior, different from the ones found so far in the literature \cite{Keller:2017rtk}.

\begin{figure}
	\begin{center}
\includegraphics[width=.7\textwidth]{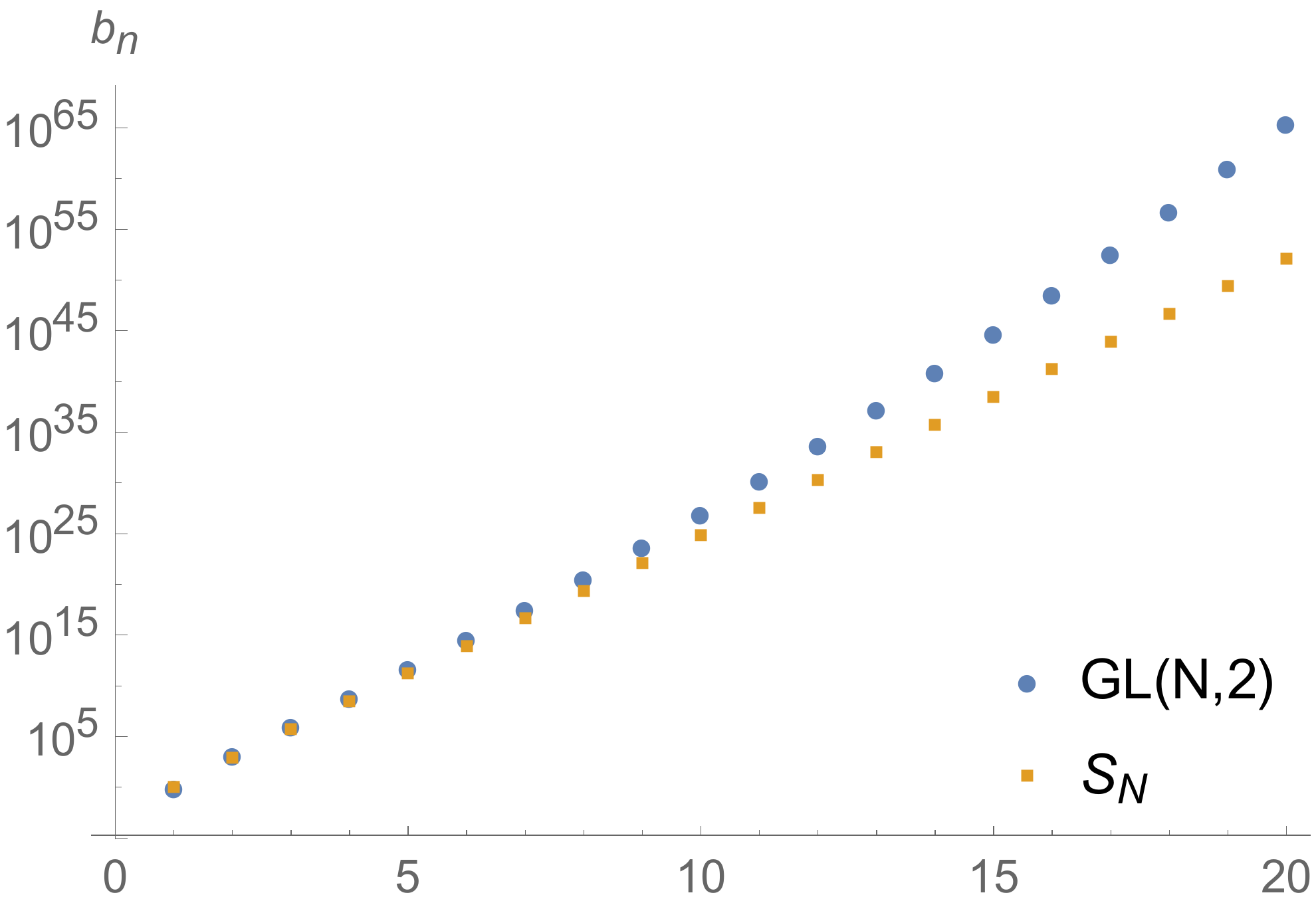}
	\end{center}
\caption{Growth of $b_n$ for $\Vi^{orb}$ for $S_N$ vs $\GL(N,2)$. For concreteness we chose $V=V_{E_8}^3$.}
\label{f:bn}
\end{figure}

\medskip

\bibliographystyle{alpha}
%\bibliographystyle{../../ytphys}
%\bibliography{../../refmain}
% \bibliographystyle{../ytphys}
 \bibliography{./refmain}
%\bibliographystyle{./ytphys}
%\bibliography{./refmain}

\end{document}